\title[Stability of planar rarefaction wave to 2D Navier-Stokes]{Stability of the planar rarefaction wave to two-dimensional compressible Navier-Stokes equations}
\author[Li]{Linan Li}
\address[Linan Li]{\newline Institute of Applied Mathematics, AMSS, Academia Sinica, Beijing 100190, P. R. China}
\email{linanli@amss.ac.cn}
\author[Wang]{Yi Wang}
\address[Yi Wang]{\newline \newline CEMS, HCMS, NCMIS, Academy of Mathematics and Systems Science, Chinese Academy of Sciences, Beijing 100190, China
\newline and School of Mathematical Sciences, University of Chinese Academy of Sciences, Beijing 100049, China
}
\email{wangyi@amss.ac.cn}
\thanks{\textbf{Acknowledgment.} Y. Wang is supported by NSFC grants No. 11671385 and 11688101 and CAS Interdisciplinary Innovation Team}
\newtheorem{theorem}{Theorem}[section]
\newtheorem{lemma}{Lemma}[section]
\newtheorem{proposition}{Proposition}[section]
\newtheorem{remark}{Remark}[section]
\newcommand{\bbr}{\mathbb R}
\newcommand{\bbt}{\mathbb T}
\def\charf {\mbox{{\text 1}\kern-.30em {\text l}}}
\def\lam{\lambda}  
\def\di{\displaystyle}
\begin{document}

\date{\today}

  %
  %

\begin{abstract}
It is well-known that the rarefaction wave, one of the basic wave patterns to the hyperbolic conservation laws, is nonlinearly stable to the one-dimensional compressible Navier-Stokes equations (cf. \cite{MN, MN2, LX, NYZ}). In the present paper we proved the time-asymptotically nonlinear stability of the planar rarefaction wave to the two-dimensional compressible and isentropic Navier-Stokes equations, which gives the first stability result of the planar rarefaction wave to the multi-dimensional system with physical viscosities. 
\end{abstract}

\maketitle \centerline{\date}
%
%
\section{Introduction and Main Result}
\setcounter{equation}{0}
We are concerned with the time-asymptotic stability of the planar rarefaction wave to the two-dimensional  compressible, isentropic Navier-Stokes equations which read
\begin{equation}  \label{NS}
\begin{cases}
\displaystyle \rho_t + div(\rho \textbf{u}) = 0,        \qquad \qquad \qquad \qquad \qquad \qquad \qquad \qquad (x, y) \in \bbr \times \bbt, ~t \in\bbr^+, \\
\displaystyle (\rho \textbf{u})_t + div(\rho \textbf{u} \otimes \textbf{u}) + \nabla p(\rho) = \mu\triangle\textbf{u} + (\mu + \lam)\nabla div\textbf{u},
\end{cases}
\end{equation}
where $\rho = \rho(t, x, y) \geq 0, \textbf{u} = \textbf{u}(t, x, y) = (u, v)^{t}(t, x, y)$ and $p=p(t,x,y)$ represent the density, the velocity and the pressure of the fluid, respectively, where and in the sequel the notation $(\cdot,\cdot)^t$ meaning the transpose of the vector $(\cdot,\cdot)$. Here it is assumed that the shear viscosity $\mu$ and the bulk viscosity $\lam$ both are constants satisfying the physical restrictions
\begin{equation} \label{VIS}
  \mu > 0, \qquad \mu + \lam \geq 0,
\end{equation}
and the pressure $p=p(\rho)$ is given by the so-called $\gamma-$law
\[
  p(\rho) = \frac{\rho^\gamma}{\gamma}
\]
with $\gamma\geq 1$ being the fluid constant. For the spatial domain, we consider the case $x\in \bbr$ being the real line and $y\in \bbt:=\bbr/\mathbb{Z}$ being one-dimensional unit flat torus, and therefore the differential operator $div\textbf{f}:=\partial_xf_1+\partial_y f_2$ with $\textbf{f}=(f_1,f_2)^t$, $\nabla:=(\partial_x,\partial_y)^t$ and $\triangle:=\partial^2_x+\partial_y^2$.
The initial values  to the problem \eqref{NS} are imposed by
\begin{equation} \label{NSI}
(\rho,\textbf{u})(0,x,y)=(\rho, u,v)(0, x, y) = (\rho_0, u_0, v_0)(x, y) .
\end{equation}
Since we are concerned with the stability of planar rarefaction wave to the system \eqref{NS}, it is assumed that the far fields conditions on the $x-$direction
\begin{equation}
 (\rho, u, v)(t,x, y)\rightarrow (\rho_\pm, u_\pm, 0), \qquad{\rm as} \quad x\rightarrow \pm\infty,
\end{equation}
with $\rho_\pm>0, u_\pm$ being prescribed constants, and the periodic boundary conditions are imposed on $y\in\mathbb{T}$ for the solution $(\rho,u,v)(t,x,y)$, where the end states $(\rho_\pm, u_\pm)$ are connected by the rarefaction wave solution of the Riemann problem to the corresponding one-dimensional hyperbolic conservation laws
\begin{equation}  \label{ES}
\begin{cases}
\displaystyle \rho_t + (\rho u)_x = 0,               \quad\qquad \qquad x \in \bbr, ~t > 0, \\
\displaystyle (\rho u)_t + (\rho u^2 + p(\rho))_x = 0,
\end{cases}
\end{equation}
with the Riemann initial data
\begin{equation} \label{ini}
(\rho_0^r, u_0^r)(x) = \begin{cases}
(\rho_-, u_-),     \quad x < 0,\\
(\rho_+, u_+),     \quad x > 0.
\end{cases}
\end{equation}

\

It could be expected that the large-time behavior of the solution to the compressible Navier-Stokes equations \eqref{NS}-\eqref{NSI} is closely related to the Riemann problem to the corresponding  two-dimensional compressible Euler equations
\begin{equation}  \label{2ES}
\begin{cases}
\displaystyle \rho_t + (\rho u)_x +(\rho v)_y= 0,               \quad\qquad \qquad (x,y) \in \bbr\times\bbt, ~t > 0, \\
\displaystyle (\rho u)_t + (\rho u^2 + p(\rho))_x +(\rho u v)_y= 0,\\
\di (\rho v)_t + (\rho uv )_x +(\rho v^2+ p(\rho))_y= 0,
\end{cases}
\end{equation}
with the Riemann initial data
\begin{equation} \label{2ini}
(\rho_0, u_0,v_0)(x) = \begin{cases}
(\rho_-, u_-,0),     \quad x < 0,\\
(\rho_+, u_+,0),     \quad x > 0.
\end{cases}
\end{equation}

\

Yet there are essential differences between the one-dimensional Riemann problem \eqref{ES}-\eqref{ini} and the two-dimensional Riemann problem \eqref{2ES}-\eqref{2ini} even with $v$-component continuous on both sides of $x=0$ in \eqref{2ini}. Precisely speaking, it is first proved by Chiodaroli-DeLellis-Kreml \cite{CDK}  and Chiodaroli-Kreml \cite{CK}
that there are infinitely many bounded admissible weak solutions to \eqref{2ES}-\eqref{2ini} satisfying the natural entropy condition for shock Riemann initial data by using the convex integration methods in DeLellis-Szekelyhid \cite{DS} while the construction of weak solution in \cite{CDK, CK} seems essential to the two-dimensional system and can not be applied to one-dimensional problem \eqref{ES}-\eqref{ini}. Then Klingenberg-Markfelder \cite{KM} and Brezina-Chiodaroli-Kreml \cite{BCK} extend the results in \cite{CK, CDK} to the case when the corresponding Riemann initial data contain shock or contact discontinuity. On the other hand, similar to the one-dimensional case, for the Riemann solution only containing rarefaction waves to  \eqref{2ES}-\eqref{2ini}, Chen-Chen\cite{CC} and Feireisl-Kreml \cite{FK},  Feireisl-Kreml-Vasseur \cite{FKV} independently proved the uniqueness of the uniformly bounded admissible weak solution even the rarefaction waves are connected with vacuum states (cf. \cite{CC}).

\

The inviscid Euler system \eqref{2ES} or \eqref{ES} is ideal model for fluids and the real fluids are often described by the corresponding Navier-Stokes equations \eqref{NS} with physical viscosities satisfying the restrictions \eqref{VIS}. As mentioned before, we can expect that the large-time behavior of the solution to the compressible Navier-Stokes equations \eqref{NS}-\eqref{NSI} is determined by the Riemann problem to the corresponding inviscid Euler system \eqref{2ES} or  \eqref{ES}, which contains planar shock wave and rarefaction wave in the genuinely nonlinear characteristic fields and contact discontinuity in the linearly degenerate field.  It is interesting to prove the above expectations in mathematics rigor and to investigate the dynamic stability of the basic wave patterns for the compressible Navier-Stokes equations with physical viscosities, and there are rather satisfactory results in the one-dimensional case. The stability of rarefaction wave to the one-dimensional compressible isentropic Navier-Stokes equations was first proved by Matsumura-Nishihara \cite{MN, MN2} and then was extended to full Navier-Stokes-Fourier system by Liu-Xin \cite{LX} and Nishihara-Yang-Zhao \cite{NYZ}. Then Liu-Yu \cite{LY} proved the stability of rarefaction wave to one-dimensional general $n\times n$ conservation laws system with artificial viscosity by point-wise Green function methods. 

\

However, in the multi-dimensional case, the stability of planar rarefaction wave for the compressible Navier-Stokes equations \eqref{NS} is still open and the existing results are confined to the scalar viscous conservation laws case by Xin \cite{X} and its extended results by Ito \cite{I} and Nishikawa-Nishihara \cite{NN}. Note that Hokari-Matsumura \cite{HM} proved the stability of planar rarefaction wave to some artificial $2\times2$ system with uniformly positive viscosities, while this result crucially depends on the strict positivity of the viscosity matrix and can not be applied to the compressible Navier-Stokes system \eqref{NS} with only partial viscosities. 

\

The main purpose of the present paper is to prove the stability of planar rarefaction wave for the two-dimensional compressible Navier-Stokes equation \eqref{NS} with physical constraints \eqref{VIS}. Compared with the one-dimensional stability results in \cite{MN, MN2}, the additional difficulties here lie in the higher dimensionality, the propagation of rarefaction wave in $y-$direction and its interaction with the wave in $x-$direction. Fortunately, we can overcome these difficulties in the case $y\in \bbt$ and got the stability result by using the elementary energy methods. Here we fully used the physical structures of the system which cause some essential cancellations to close the energy estimates (see details in Lemmas \ref{lemma4.2}-\ref{lemma4.5}).  More precisely, we prove that if the rarefaction wave strength is suitably weak (i.e., $|\rho_+ - \rho_-| + |u_+ - u_-| \ll 1$) and initial values $(\rho_0, u_0, v_0)$ in \eqref{NSI} are suitably close to the planar rarefaction wave, then the initial value problem \eqref{NS}-\eqref{NSI} has a global-in-time smooth solution which tends to the planar rarefaction wave $(\rho^r, u^r, 0)(x/t)$ as $t \to +\infty$. The detailed stability result can be found in Theorem \ref{theorem1} below.

\

Now we fist give the description of the one-dimensional rarefaction wave to \eqref{ES} and the planar rarefaction wave to \eqref{2ES}. It is well-known that the inviscid Euler system \eqref{ES} is a strictly hyperbolic one for $\rho >0$ with two distinct eigenvalues
\[
\lambda_1(\rho, u) = u - \sqrt{p'(\rho)},   \qquad \lambda_2(\rho, u) = u + \sqrt{p'(\rho)}
\]
with the corresponding right eigenvectors denoted by $r_1(\rho,u)$ and $r_2(\rho,u)$, respectively,
and both characteristic fields are genuinely nonlinear. The $i-$Riemann invariant $z_i(\rho, u)~(i=1,2)$ is given by\[
z_i (\rho, u) = u + (-1)^{i+1}\int^{\rho}\frac{\sqrt{p'(s)}}{s}ds,
\]
satisfying $\nabla_{(\rho,u)}z_i(\rho,u)\cdot r_i(\rho,u)\equiv0,~(i=1,2), \forall \rho, u.$
Without loss of generality, here we only consider the 2-rarefaction wave case and the cases of 1-rarefaction wave and the superposition of two rarefaction waves can be proved similarly. It is well-known that if the states $(\rho_\pm, u_\pm)$ satisfy
\begin{equation} \label{RC}
u_+ - \int_{\rho_-}^{\rho_+}\frac{\sqrt{p'(s)}}{s}ds = u_-,  \qquad \lambda_2(\rho_+, u_+)>\lambda_2(\rho_-, u_-),
\end{equation}
i.e., 2-Riemann invariant $z_2(\rho,u)$ is constant and the second eigenvalue $\lambda_2(\rho,u)$ is expanding along the 2-rarefaction wave curve, then 
the Riemann problem \eqref{ES}-\eqref{ini} would admit a self-similar wave fan $(\rho^r, u^r)(x/t)$ which consists of only the constant states and the centered rarefaction waves (cf. \cite{L}). Then the planar rarefaction wave solution to the two-dimensionl compressible Euler equations \eqref{2ES}-\eqref{2ini} is defined by $(\rho^r, u^r,0)(\frac xt)$ with $(\rho^r, u^r)(\frac xt)$ being the one-dimensional rarefaction wave to \eqref{ES}.

\

To be more precise, we assume that
\begin{align}
\begin{aligned} \label{RIC}
&(\rho_0 - \rho_0^r, u_0 - u_0^r, v_0) \in L^2(\bbr \times \bbt), \\
&(\nabla \rho_0, \nabla\textbf{u}_0) \in H^1(\bbr \times \bbt)
\end{aligned}
\end{align}
and set 
\[
\Phi_0^2 = \|(\rho_0 - \rho_0^r, u_0 - u_0^r, v_0)\|^2 + \|(\nabla \rho_0, \nabla\textbf{u}_0)\|_1^2 + |(\rho_+ - \rho_-, u_+ - u_-)|^2,
\]
where $\nabla\textbf{u}_0 = (\nabla u_0, \nabla v_0)^t$. Here $H^k(\bbr \times \bbt)(k \geq 0, k\in \mathbb{Z})$ denotes the usual Sobolev space with the norm $\|\cdot\|_k$. We denote $L^2(\bbr \times \bbt) = H^0(\bbr \times \bbt)$ and simply set $\|\cdot\| = \|\cdot\|_0$.
Then our main result can be stated as follows.

\begin{theorem} \label{theorem1}
	For the planar 2-rarefaction wave $(\rho^r, u^r, 0)(\frac xt)$ connecting the constant states $(\rho_\pm, u_\pm,0)$ satisfying (\ref{RC}) with $\rho_\pm >0$, there exists a positive constant $\epsilon_0$ such that if the initial perturbation around the planar rarefaction wave and the wave strength satisfy $\Phi_0 < \epsilon_0$, then the initial value problem (\ref{NS})-(\ref{NSI}) admits a unique global smooth solution $(\rho, \textbf{u}) = (\rho, u, v)$ satisfying
	\[
	\begin{cases}
	(\rho - \rho^r, u - u^r, v) \in C^0(0, +\infty; L^2(\bbr \times \bbt)), \\
	(\nabla\rho, \nabla\textbf{u}) \in C^0(0, +\infty; H^1(\bbr \times \bbt)), \\
	\nabla^3 \textbf{u} \in L^2(0, +\infty; L^2(\bbr \times \bbt)),
	\end{cases}
	\]
	and the time-asymptotic stability of the planar 2-rarefaction wave holds true in the sense that
	\begin{equation} \label{AB}
	\lim_{t \to +\infty} \sup_{(x, y)\in \bbr \times \bbt} |(\rho, u, v)(t, x, y) - (\rho^r, u^r, 0)(\frac xt)| = 0.
	\end{equation}
\end{theorem}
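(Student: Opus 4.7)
The plan is to follow the Matsumura--Nishihara framework adapted to two space dimensions with a periodic transverse direction. First, I approximate the self-similar 2-rarefaction $(\rho^r,u^r)(x/t)$ by a smooth profile $(\tilde\rho,\tilde u)(t,x)$, constructed via a smoothed rarefaction of the inviscid Burgers equation, so that $\tilde u_x(t,x)>0$ with controllable $L^p$ decay rates in $t$, the 2-Riemann invariant $z_2(\tilde\rho,\tilde u)$ is identically constant, and $(\tilde\rho,\tilde u,0)$ solves the 2D Euler system \eqref{2ES} exactly. Substituting $(\rho,u,v)=(\tilde\rho+\phi,\tilde u+\psi_1,\psi_2)$ into \eqref{NS} then gives a quasilinear parabolic--hyperbolic system for the perturbation $(\phi,\psi_1,\psi_2)$ with source terms coming from the viscous action on $\tilde{\mathbf u}$, all of which decay in time at known algebraic rates.

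Global existence will follow from local existence combined with the standard continuation argument, so the core issue is to close an a priori energy estimate on $[0,T]$ under the ansatz $\sup_{[0,T]}\|(\phi,\psi_1,\psi_2)\|_{H^2}\le\delta\ll 1$. For the basic $L^2$ estimate I multiply the perturbed mass equation by $p'(\tilde\rho)\phi/\tilde\rho$ and the two momentum equations by $\psi_1$ and $\psi_2$, sum and integrate; thanks to the constancy of $z_2$ along the wave, the symmetric energy identity produces a favorable term proportional to $\int\tilde u_x(\phi^2+\psi_1^2+\psi_2^2)\,dxdy$, together with the viscous dissipation $\mu\|\nabla(\psi_1,\psi_2)\|^2+(\mu+\lam)\|\mathrm{div}(\psi_1,\psi_2)\|^2$, while the cubic and source terms are controlled via Cauchy--Schwarz, the $L^p$ decay of $(\tilde\rho-\rho^r,\tilde u-u^r)$, the smallness of the wave strength $|(\rho_+-\rho_-,u_+-u_-)|$, and the a priori smallness $\delta$.

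The main obstacle is the higher-order estimate, since \eqref{NS} has no viscosity on the density. Following the one-dimensional playbook, I pair $\nabla\phi$ with the momentum equation to extract a term $c\int p'(\tilde\rho)|\nabla\phi|^2/\rho\,dxdy$ from the pressure gradient, paying with mixed terms of the form $\int\nabla\phi\cdot\partial_t(\psi_1,\psi_2)$ that are moved under $d/dt$ and rewritten via the mass equation, then absorbed into the velocity dissipation. The genuinely two-dimensional complication is the interaction between $x$- and $y$-directions: terms such as $\int\tilde u_x\,\psi_2\,\partial_y\psi_1$, $\int\rho v\,\partial_y u\cdot(\cdots)$, and cross-couplings of $\partial_y$-derivatives with $\partial_x\tilde\rho$ arise from the convective and pressure nonlinearities, and naive Cauchy--Schwarz does not produce time-integrable quantities. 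Hidden cancellations stemming from the symmetric physical structure of $\rho\mathbf{u}\otimes\mathbf{u}$ and from the constancy of $z_2$, together with the smallness of $\delta$ and of the wave strength, must be used to absorb these terms; this is precisely the delicate bookkeeping carried out in Lemmas~4.2--4.5, and I expect it to be the hardest step.

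Once the uniform $H^2$ bound and the spacetime integrability $\int_0^\infty\bigl(\|\sqrt{\tilde u_x}(\phi,\psi_1,\psi_2)\|^2+\|\nabla(\phi,\psi_1,\psi_2)\|_1^2+\|\nabla^3(\psi_1,\psi_2)\|^2\bigr)dt<\infty$ are established, global existence follows by continuation. Differentiating the dissipation norms in time using the equations yields uniform bounds on their time derivatives, which together with the spacetime integrability force $\|\nabla(\phi,\psi_1,\psi_2)(t)\|\to 0$ as $t\to\infty$. Combined with the uniform $H^2$ bound and Sobolev embedding on $\bbr\times\bbt$, this yields $L^\infty$ convergence of $(\phi,\psi_1,\psi_2)$ to zero, and since $(\tilde\rho,\tilde u)(t,x)\to(\rho^r,u^r)(x/t)$ in $L^\infty$ as $t\to\infty$, the asymptotic statement \eqref{AB} follows.
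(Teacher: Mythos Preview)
Your overall framework---smooth approximate rarefaction via Burgers, perturbation system, basic $L^2$ energy plus higher-order estimates recovering dissipation on $\nabla\phi$, continuation, then decay of $\|\nabla(\phi,\Psi)\|$ via time-integrability of its derivative and the anisotropic Sobolev inequality on $\bbr\times\bbt$---matches the paper's. Two points, one minor and one essential.

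Minor: the basic energy identity does \emph{not} yield a favorable term $\int\tilde u_x\,\psi_2^2$ for the transverse velocity; the paper's Lemma~4.1 produces only $\|\bar u_x^{1/2}(\phi,\varphi)\|^2$. This is harmless (that term is never needed), but your statement is inaccurate.

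Essential: you misidentify the crucial cancellation in the $\nabla\phi$ estimate. The obstruction is not the cross-direction convective terms you list, and the resolution has nothing to do with the structure of $\rho\mathbf u\otimes\mathbf u$ or with the constancy of $z_2$. The real danger is the \emph{viscous} term: multiplying the momentum equation by $\nabla\phi/\rho$ produces
\[
-\int\frac{1}{\rho}\bigl(\mu\,\triangle\Psi+(\mu+\lambda)\nabla\mathrm{div}\,\Psi\bigr)\cdot\nabla\phi,
\]
which carries one more derivative of $\Psi$ than the available dissipation and cannot simply be ``absorbed into the velocity dissipation''. Left as is, the first-order estimate would depend on the second-order one, the second on the third, and the hierarchy never closes. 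The paper's key observation (stated explicitly after (4.12)) is to \emph{also} apply $\nabla$ to the mass equation and multiply by $\nabla\phi/\rho^2$; the term $\rho\,\nabla\mathrm{div}\,\Psi$ in the mass equation then contributes $\rho^{-1}\nabla\phi\cdot\nabla\mathrm{div}\,\Psi$, which one splits as
\[
\nabla\mathrm{div}\,\Psi=\frac{(\mu+\lambda)\nabla\mathrm{div}\,\Psi+\mu\,\triangle\Psi}{2\mu+\lambda}+\frac{\mu}{2\mu+\lambda}\bigl(\nabla\mathrm{div}\,\Psi-\triangle\Psi\bigr).
\]
In two dimensions $\nabla\mathrm{div}\,\Psi-\triangle\Psi=(\partial_y,-\partial_x)(\psi_x-\varphi_y)$, so after dotting with $\nabla\phi/\rho$ this remainder is an exact divergence and integrates away. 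Multiplying the mass identity by $2\mu+\lambda$ and adding it to the momentum identity then cancels the dangerous second-order viscous terms \emph{exactly}. The same algebraic mechanism is repeated verbatim at the next order in Lemma~4.4. This cancellation is the heart of Lemmas~4.2--4.5; the cross-direction convective terms you focus on are, by contrast, handled by routine $L^4$ interpolation and smallness.
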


\begin{remark}
Theorem \ref{theorem1} gives a first stability result of the planar rarefaction wave to the multi-dimensional system \eqref{NS} with physical viscosities. 
\end{remark}
\begin{remark}
Our stability analysis could also be applied to the time-asymptotic stability of the superposition of 1-rarefaction wave and 2-rarefaction wave to the two-dimensional compressible Navier-Stokes equations \eqref{NS} and the wave interaction estimates as in \cite{MN} will be considered additionally. 
\end{remark}


In the next section, we first construct a smooth approximate rarefaction wave which tends to the rarefaction wave fan uniformly as the time $t$ tends to infinity. Then we reformulate the system for the perturbation around the approximate rarefaction wave  and then establish the a priori estimates for the perturbation by an elementary $L^2$ energy method. Finally, in the last section, based on these a priori estimates, we proved our main Theorem \ref{theorem1}.

%
%
\section{Smooth Approximate Rarefaction Wave}
\setcounter{equation}{0}
Since the rarefaction wave is only Lipschitz continuous, we will construct a smooth approximation rarefaction wave  through the Burgers equation as in \cite{MN} and \cite{X}. 
Consider the Riemann problem for the inviscid Burgers equation:
\begin{equation} \label{BE}
\begin{cases}
\displaystyle w_t + ww_x = 0, \\
\displaystyle w(0, x) = w_0^r(x) = \begin{cases}
w_-,  \quad x < 0,\\
w_+,  \quad x > 0.
\end{cases}
\end{cases}
\end{equation}
If $w_- < w_+$, then \eqref{BE} has the rarefaction wave fan $w^r(t, x) = w^r(x/t)$ given by
\begin{equation} \label{BES}
w^r(t, x) = w^r(\frac{x}{t}) = \begin{cases}
w_- , \qquad x < w_-t, \\
\frac{x}{t}, \qquad w_-t \leq x \leq w_+t, \\
w_+ , \qquad x > w_+t.
\end{cases}
\end{equation}
As in \cite{MN}, the approximate rarefaction wave to the compressible Navier-Stokes equations \eqref{NS} can be constructed through the smooth solution of the Burgers equation
\begin{equation} \label{ABE}
\begin{cases}
\displaystyle w_t + ww_x = 0, \\
\displaystyle w(0, x) = w_0(x) = \frac{w_+ + w_-}{2} + \frac{w_+ - w_-}{2} \tanh x.
\end{cases}
\end{equation}
Then, by the characteristic methods, the solution $w(t, x)$ of the problem \eqref{ABE} has the following properties and their proofs can be found in \cite{MN}.
\begin{lemma} \label{lemma2.1}
  Suppose $w_+ > w_-$ and set $\tilde{w} = w_+ - w_-$. Then the problem \eqref{ABE} has a unique smooth global solution $w(t, x)$ such that
   
  (1)~$w_- < w(t, x) < w_+, ~w_x >0$ for $x \in \bbr$ and $t \geq 0$.
   
  (2)~The following estimates hold for all $t > 0$ and $p \in [1, + \infty]$:
  \begin{align*}
    &\|w_x(t, \cdot)\|_{L^p} \leq C \min(|\tilde{w}|, |\tilde{w}|^{1/p}t^{-1+1/p}), \\
    &\|w_{xx}(t, \cdot)\|_{L^p} \leq C \min(|\tilde{w}|,t^{-1}), \\
    &\|w_{xxx}(t, \cdot)\|_{L^p} \leq C \min(|\tilde{w}|,t^{-1}).
  \end{align*}
  
  (3)~$\di\lim_{t \to +\infty} \sup_{x \in \bbr} |w(t, x) - w^r(x/t)| = 0$.
\end{lemma}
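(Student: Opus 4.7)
The plan is to solve \eqref{ABE} by the method of characteristics. Define the characteristic map $X(t;x_0) = x_0 + w_0(x_0)\,t$. Since
\[
w_0'(x_0) = \frac{\tilde w}{2}\,\mathrm{sech}^2 x_0 > 0,
\]
the Jacobian $\partial_{x_0}X = 1 + w_0'(x_0)\,t$ is strictly positive for all $t\ge 0$, so $x_0\mapsto X(t;x_0)$ is a smooth diffeomorphism of $\bbr$. The unique global smooth solution is then $w(t,x)=w_0(x_0(t,x))$, which at once gives the range $w(t,x)\in(w_-,w_+)$ and, by implicit differentiation,
\[
w_x(t,x) = \frac{w_0'(x_0)}{1+w_0'(x_0)\,t} > 0,
\]
establishing part (1). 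The pointwise estimate $w_x\le\min(w_0'(x_0),1/t)\le C\min(|\tilde w|,1/t)$ follows from this formula and yields the $L^\infty$ bound in part (2).

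For $1\le p<\infty$, I would change variables $x\mapsto x_0$ via $dx=(1+w_0'(x_0)t)\,dx_0$ to obtain
\[
\|w_x(t,\cdot)\|_{L^p}^p = \int_\bbr \frac{(w_0'(x_0))^p}{(1+w_0'(x_0)\,t)^{p-1}}\,dx_0.
\]
Splitting the integral into the regions $\{w_0'(x_0)\,t\le 1\}$ and $\{w_0'(x_0)\,t>1\}$ and using the exponential decay $w_0'(x_0)\le C|\tilde w|e^{-2|x_0|}$ gives the interpolated bound
$\|w_x\|_{L^p}\le C\min(|\tilde w|,\,|\tilde w|^{1/p}t^{-1+1/p})$. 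For $w_{xx}$ and $w_{xxx}$, repeated implicit differentiation together with $\partial_x x_0=(1+w_0'(x_0)t)^{-1}$ yields closed-form expressions such as
\[
w_{xx} = \frac{w_0''(x_0)}{(1+w_0'(x_0)t)^3},\qquad w_{xxx} = \frac{w_0'''(x_0)}{(1+w_0'(x_0)t)^4} - \frac{3\bigl(w_0''(x_0)\bigr)^2 t}{(1+w_0'(x_0)t)^5},
\]
whose $L^p$ norms are estimated by the same change-of-variables trick; each factor $w_0^{(k)}$ is $O(|\tilde w|)$ with exponential decay, and the extra powers of $(1+w_0'(x_0)t)$ in the denominator absorb the $t$ in the numerator of $w_{xxx}$ to produce the $\min(|\tilde w|,t^{-1})$ bound.

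Finally, for part (3), I would compare characteristics: on the fan region the exact rarefaction satisfies $w^r(x/t)=x/t$ when $w_-\le x/t\le w_+$, while $w(t,x)=w_0(x_0)$ with $x/t=x_0/t+w_0(x_0)$. On any set with $x/t\in[w_-+\delta,w_+-\delta]$ one has $|x_0|\le C\log t$, so $x_0/t\to 0$ and thus $w(t,x)-w^r(x/t)=w_0(x_0)-x/t=-x_0/t\to 0$ uniformly. On the constant-state regions $x/t<w_-$ or $x/t>w_+$, the exponential tails $w_0(x_0)-w_\pm=O(e^{-2|x_0|})$ together with $|x_0|\to\infty$ give uniform convergence as well. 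The main obstacle is the careful splitting in the $L^p$ estimate, particularly for $w_{xxx}$, where one must exploit the precise algebraic cancellation between the $t$-factor in the numerator and the higher power of $1+w_0'(x_0)t$ in the denominator to keep the bound independent of $|\tilde w|$ as $t\to\infty$.
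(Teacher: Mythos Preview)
Your approach is correct and is essentially the standard method-of-characteristics argument; the paper itself does not supply a proof of this lemma but simply refers the reader to \cite{MN}, where precisely this characteristic computation (implicit formula $w(t,x)=w_0(x_0)$ with $x=x_0+w_0(x_0)t$, explicit expressions for $w_x,w_{xx},w_{xxx}$, and the change of variables $dx=(1+w_0'(x_0)t)\,dx_0$) is carried out. Your derivation of the closed-form expressions and the $L^p$ estimates matches that treatment, so there is nothing to add.
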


We now turn to the approximate rarefaction wave for the Euler system \eqref{ES}-\eqref{ini}. Here and in what follows, the constant states $(\rho_\pm, u_\pm)$ are fixed so that they are connected by the 2-rarefaction wave. Set $w_- = \lambda_2(\rho_-, u_-), w_+ = \lambda_2(\rho_+, u_+)$, and the rarefaction wave strength $\alpha := |\rho_+ - \rho_-| + |u_+ - u_-|$. It is easy to check that the 2-rarefaction wave $(\rho^r, u^r)(t, x) = (\rho^r, u^r)(x/t)$ to the Riemann problem \eqref{ES} - \eqref{RC} is given explicitly by
\begin{align*}
  &\lambda_2(\rho^r, u^r)(t, x) = w^r(t, x), \\
  &z_2(\rho^r, u^r)(t, x) = z_2 (\rho_\pm, u_\pm).
\end{align*}

Correspondingly, the smooth approximate rarefaction wave $(\bar{\rho}, \bar{u})(t, x)$ of the 2-rarefaction wave fan $(\rho^r, u^r)(t, x)$ can be constructed by
\begin{align}
  \begin{aligned} \label{AR}
    &\lambda_2(\bar{\rho}, \bar{u})(t, x) = w(1+t, x), \\
    &z_2(\bar{\rho}, \bar{u})(t, x) = z_2 (\rho_\pm, u_\pm),
  \end{aligned}
\end{align}
where $w(t,x)$ is the smooth solution to the Burgers equation in \eqref{ABE}.
One can easily check that the above approximate rarefaction wave $(\bar{\rho}, \bar{u})$ satisfies the system:
\begin{equation}  \label{ANS}
  \begin{cases}
    \displaystyle \bar{\rho}_t + (\bar{\rho}\bar{u})_x = 0, \\
    \displaystyle (\bar{\rho} \bar{u})_t + (\bar{\rho} \bar{u}^2 + p(\bar{\rho}))_x = 0, \\
    (\bar{\rho}, \bar{u})(0, x) = (\bar{\rho}_0, \bar{u}_0)(x).
  \end{cases}
\end{equation}
The following lemma comes from Lemma \ref{lemma2.1} (cf. \cite{MN}).
\begin{lemma} \label{lemma2.2}
	The smooth approximate 2-rarefaction wave $(\bar{\rho}, \bar{u})$ defined in \eqref{AR} satisfies the following properties:
	
	(1)~$\bar{u}_x = \frac{2}{\gamma + 1}w_x > 0$ for all $x \in \bbr$ and $t \geq 0, \bar{\rho}_x = \bar{\rho}^{\frac{3 - \gamma}{2}} \bar{u}_x$.
	
	(2)~The following estimates hold for all $t \geq 0$ and $p \in [1, + \infty]$:
	\begin{align*}
	&\|(\bar{\rho}_x, \bar{u}_x)\|_{L^p} \leq C \min(\alpha, \alpha^{1/p}(1+t)^{-1+1/p}), \\
	&\|(\bar{\rho}_{xx}, \bar{u}_{xx})\|_{L^p} \leq C \min(\alpha, (1+t)^{-1}), \\
	&\|(\bar{\rho}_{xxx}, \bar{u}_{xxx})\|_{L^p} \leq C \min(\alpha, (1+t)^{-1}).
	\end{align*}
	
	(3)~$\di \lim_{t \to +\infty} \sup_{x \in \bbr} |(\bar{\rho}, \bar{u})(t, x) - (\rho^r, u^r)( \frac xt)| = 0$.
\end{lemma}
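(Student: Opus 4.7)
The plan is to derive (1)–(3) directly from the properties of the Burgers solution $w$ in Lemma~\ref{lemma2.1} by exploiting the fact that, along the 2-rarefaction curve, the 2-Riemann invariant $z_2$ is constant. Hence \eqref{AR} expresses $(\bar{\rho}, \bar{u})$ pointwise as smooth invertible functions of the single scalar parameter $w(1+t,x)$: for the $\gamma$-law pressure $\sqrt{p'(\rho)}=\rho^{(\gamma-1)/2}$, and solving $z_2(\bar\rho,\bar u) = z_2(\rho_\pm,u_\pm)$ together with $\lambda_2(\bar\rho,\bar u) = w(1+t,x)$ simultaneously yields closed-form expressions of the type $\bar u = a + b\,w$ and $\bar\rho^{(\gamma-1)/2} = c(w-d)$ with constants $a,b,c,d$ depending only on $\rho_\pm, u_\pm,\gamma$.

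With this parametrization in hand, part~(1) is immediate from differentiating in $x$: one finds $\bar u_x = \tfrac{2}{\gamma+1} w_x$, which is positive by Lemma~\ref{lemma2.1}(1), and a short algebraic manipulation using the explicit formulas gives the stated relation $\bar\rho_x = \bar\rho^{(3-\gamma)/2}\bar u_x$.

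For part~(2), I iterate the chain rule: $\partial^k_x \bar u$ and $\partial^k_x \bar\rho$ for $k\leq 3$ become polynomials in $w_x, w_{xx}, w_{xxx}$ whose coefficients are smooth in $w$ and hence uniformly bounded on the compact range $[w_-, w_+]$ (and $\bar\rho$ stays bounded away from $0$ because $\alpha$ is small). Taking $L^p$ norms and applying the estimates of Lemma~\ref{lemma2.1}(2) with time shifted to $1+t$, together with the equivalence $\tilde w = \lambda_2(\rho_+, u_+)-\lambda_2(\rho_-, u_-) \sim \alpha$ in the weak-wave regime, delivers the three stated inequalities.

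Part~(3) follows by composing Lemma~\ref{lemma2.1}(3) — which gives $\sup_x |w(1+t,x) - w^r(x/(1+t))| \to 0$ — with the continuous inverse map $w\mapsto(\bar\rho,\bar u)$ identified above. The residual error $|w^r(x/(1+t)) - w^r(x/t)|$ is controlled using that $w^r$ is Lipschitz with constant~$1$ on the fan $[w_-t, w_+t]$ and constant outside, giving a bound of order $1/t\to 0$. No serious obstacle arises; the only mildly technical point is tracking the harmless time shift $t\mapsto 1+t$, which leaves the decay rates in (2) and the limit in (3) intact.
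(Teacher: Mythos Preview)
Your proposal is correct and is exactly the route the paper indicates: the paper does not spell out a proof but simply states that Lemma~\ref{lemma2.2} ``comes from Lemma~\ref{lemma2.1} (cf.~\cite{MN}),'' and your argument---solving \eqref{AR} for $(\bar\rho,\bar u)$ as smooth functions of $w(1+t,x)$, differentiating, and transferring the $L^p$ bounds and uniform limit from Lemma~\ref{lemma2.1}---is precisely the standard derivation behind that citation. Your handling of the time shift $t\mapsto 1+t$ and the residual $|w^r(x/(1+t))-w^r(x/t)|$ in part~(3) is the only detail the paper leaves entirely implicit, and you treat it correctly.
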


%
%
\section{Reformulation of the problem}
\setcounter{equation}{0}
Now we define the perturbation around the approximate rarefaction wave $(\bar \rho, \bar u, 0)$ in \eqref{ANS} by
\begin{align*}
&\phi(t, x, y) = \rho(t, x, y) - \bar{\rho}(t, x), \\
&\Psi(t, x, y) = (\varphi, \psi)^t(t, x, y) = (u, v)^t(t, x, y) - (\bar{u}, 0)^t(t, x),
\end{align*}
with $(\rho, u, v)(t,x,y)$ being the solution to the problem \eqref{NS}-\eqref{NSI}.
Then we obtain the perturbation system for $(\phi,\Psi):$
\begin{equation}  \label{REF}
    \begin{cases}
    \displaystyle \phi_t + \rho div\Psi + \rho_y\psi + u\phi_x + \bar{\rho}_x\varphi +\bar{u}_x\phi = 0, \\
    \displaystyle \rho \Psi_t + \rho u\Psi_x + \rho \psi \Psi_y + (\rho \bar{u}_x \varphi, 0)^t + p'(\rho) \nabla\phi + ((p'(\rho) -   \frac{\rho}{\bar{\rho}} p'(\bar{\rho})) \bar{\rho}_x, 0)^t \\
         \qquad\qquad = \mu\triangle\Psi + (\mu + \lam)\nabla div\Psi + ((2\mu + \lam)\bar{u}_{xx}, 0)^t,
    \end{cases}
\end{equation}
\begin{equation} \label{REFI}
  (\phi, \Psi)(0, x, y) = (\phi_0, \Psi_0)(x, y) = (\phi_0, \varphi_0, \psi_0)(x, y) = (\rho_0 - \bar{\rho}_0, u_0 - \bar{u}_0, v_0)(x, y) \in H^2,
\end{equation}
and we note that the condition \eqref{RIC} assures \eqref{REFI} holds true. We first choose a positive constant $E_0$ by virtue of Sobolev imbedding theorem such that
\begin{align} \label{SI}
  \sup_{(x, y)\in \bbr \times \bbt} |f(x, y)| \leq \frac{1}{2}\rho_- \qquad \textrm{for any } f \in H^2, ~\| f \|_2 \leq E_0.
\end{align}
Note that if $E_0$ is suitably small, then \eqref{SI} is obviously true.
Then the solution of \eqref{REF}, \eqref{REFI} is sought in the set of functional space  $X(0, +\infty)$, where for $0 \leq T \leq +\infty$, we define
\begin{align*}
  X(0, T) = &\{ (\phi, \Psi)|(\phi, \Psi) \in C^0(0, T; H^2), \nabla \phi \in L^2(0, T; H^1), \nabla \Psi \in L^2(0, T; H^2) \\
	&{\rm and} \sup_{0 \leq t \leq T} \| (\phi, \Psi)(t) \|_2 \leq E_0 \}.
\end{align*}
Then the conditions $\di \sup_{0 \leq t \leq T} \| (\phi, \Psi)(t) \|_2 \leq E_0 $ and \eqref{SI} imply that $| \phi |, | \Psi | \leq \frac{1}{2}\rho_-$ and $|\textbf{u}|=|(u,v)| \leq C$ with $C$ being a positive constant which only depends on $\rho_-, u_\pm$, and therefore, the density function $\rho(t,x,y):=\bar \rho(t,x)+\phi(t,x,y)$ satisfies that 
\begin{equation}\label{db}
0 < \frac{1}{2}\rho_- \leq \rho \leq \frac{1}{2}\rho_- + \rho_+,
\end{equation}
since $0 < \rho_- \leq \bar{\rho} \leq \rho_+.$ Notice that the uniform lower and upper boundness of the density function $\rho(t,x,y)$ in \eqref{db} guarantee the strict parabolicity of the momentum equation $\eqref{NS}_2$, which are crucial for the local and global-in-time existence of the classical solution to the system \eqref{NS}.

\begin{proposition} \label{proposition3.1}
For the planar 2-rarefaction wave $(\rho^r, u^r, 0)(\frac xt)$ connecting the constant states $(\rho_\pm, u_\pm,0)$ satisfying (\ref{RC}), there exist positive constants $\epsilon_0$ and $C$ such that if $\|(\phi_0, \Psi_0)\|_2 + \alpha \leq \epsilon_0$, then the reformulated problem \eqref{REF}-\eqref{REFI} admits a unique global solution $(\phi, \Psi) \in X(0, +\infty)$ satisfying
  \begin{align}
  \begin{aligned} \label{PRO3.1}
    &\sup_{t \geq 0} \| (\phi, \Psi)(t) \|_2^2 + \int_{0}^{+\infty} \Big[\|\bar{u}_x^{1/2} (\phi, \varphi)\|^2 + \|(\nabla\phi, \nabla\Psi)\|_1^2 + \|\nabla^3\Psi\|^2 \Big]dt \\
    &\leq C \big[\|(\phi_0, \Psi_0)\|_2^2 + \alpha^{1/4}\big].
  \end{aligned}
  \end{align}
\end{proposition}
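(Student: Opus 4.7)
The plan is to establish Proposition \ref{proposition3.1} by the standard continuation argument: combine a local existence result for the reformulated system \eqref{REF}--\eqref{REFI} with a uniform a priori estimate of the form \eqref{PRO3.1} on any time interval $[0,T]$ on which $N(T):=\sup_{0\leq t\leq T}\|(\phi,\Psi)(t)\|_2$ is small. The local solvability in $X(0,T_0)$ follows from the uniform bound \eqref{db} on the density, which guarantees strict parabolicity of the momentum equation; the real work is in closing the a priori estimate under the smallness assumption $N(T)+\alpha\ll 1$.

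For the zeroth-order estimate, I would multiply the continuity equation by $p'(\rho)\phi/\rho-p'(\bar\rho)\phi/\bar\rho$ (or equivalently work with a relative-entropy type functional $P(\rho,\bar\rho)$ built from the internal energy so that $\partial_\rho P(\bar\rho,\bar\rho)=0$) and the momentum equation by $\Psi$, integrate over $\bbr\times\bbt$, and add. The terms cubic in the perturbation get absorbed using the a priori smallness and Sobolev embedding. The crucial positive quadratic $\int\bar u_x\,[\,\text{const}\cdot\phi^2+\rho\varphi^2]\,dxdy$, which is the source of the dissipation $\|\bar u_x^{1/2}(\phi,\varphi)\|^2$ in \eqref{PRO3.1}, comes from combining the $\bar u_x\phi$ term in $\eqref{REF}_1$ with the $\rho\bar u_x\varphi$ term in the first component of $\eqref{REF}_2$, and is nonnegative precisely because $\bar u_x>0$ along the rarefaction fan (Lemma \ref{lemma2.2}(1)). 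The inhomogeneous terms $(p'(\rho)-\frac{\rho}{\bar\rho}p'(\bar\rho))\bar\rho_x$ and $(2\mu+\lam)\bar u_{xx}$ from the approximate wave are Taylor-expanded around $\bar\rho$ and absorbed using the $L^p$ estimates of Lemma \ref{lemma2.2}(2) together with Cauchy--Schwarz, Young, and the interpolation $\|\bar u_{xx}\|^2\leq C\min(\alpha^2,(1+t)^{-1})$ which is time-integrable with $\int_0^\infty\|\bar u_{xx}\|\,dt\leq C\alpha^{1/4}$ after optimal splitting $\int_0^{T_*}+\int_{T_*}^\infty$, producing the $\alpha^{1/4}$ on the right of \eqref{PRO3.1}.

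For the first- and second-order estimates I would differentiate \eqref{REF} in $x$ and in $y$, then repeat the energy procedure. The $\partial_y$ estimates are simpler because $\bar\rho,\bar u$ are independent of $y$, so no new background-forcing terms appear and the estimate closes by pure viscous dissipation $\mu\|\nabla\partial_y\Psi\|^2+(\mu+\lam)\|\partial_y\,\mathrm{div}\,\Psi\|^2$. The $\partial_x$ estimates produce commutator terms involving $\bar u_{xx},\bar\rho_{xx},\bar u_{xxx}$ which are again controlled by Lemma \ref{lemma2.2}. Because \eqref{REF} gives no direct viscous dissipation for $\phi$, to recover $\|\nabla\phi\|_1^2$ in \eqref{PRO3.1} I would use the momentum equation $\eqref{REF}_2$ to solve for $p'(\rho)\nabla\phi$ in terms of $\Psi_t$, convective terms, and $\triangle\Psi$, and pair it with $\nabla\phi$; this trades a factor of $\nabla\phi$ against $\nabla\Psi_t$, which is then re-expressed via $\eqref{REF}_2$ in terms of $\nabla^3\Psi$ and lower-order quantities. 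This furnishes the $\|\nabla^3\Psi\|^2$ dissipation appearing in \eqref{PRO3.1} and closes the hierarchy.

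The main obstacle, as the authors flag, is the two-dimensional coupling: the transverse velocity $\psi$ enters the estimates through nonlinear terms such as $\rho\psi\Psi_y$ in the momentum equation and $\rho_y\psi$ in the continuity equation, and these must be handled without destroying the sign of the rarefaction-induced dissipation $\int\bar u_x[\phi^2+\varphi^2]$. The expected resolution, hinted at in Lemmas 4.2--4.5 of the paper, is a sequence of structural cancellations: integration by parts in $y$ on $\bbt$ (no boundary contribution) transfers $\psi$-derivatives onto coefficients that depend only on $x$, killing the dangerous cross terms, while the specific combination of $\rho\bar u_x\varphi^2$ against $(p(\rho)-p(\bar\rho)-p'(\bar\rho)\phi)\bar u_x$ (from expanding the pressure) is manifestly nonnegative. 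Once these cancellations are in place, Gronwall applied to the total energy, together with the smallness of $\alpha$ (which controls the profile-forcing in $L^1_t$ up to $\alpha^{1/4}$) and of $\|(\phi_0,\Psi_0)\|_2$, closes the a priori estimate and upgrades the local solution to a global one satisfying \eqref{PRO3.1}.
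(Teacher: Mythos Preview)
Your overall architecture (local existence plus continuation via uniform a priori bounds, relative-entropy functional for the $L^2$ level, Lemma~\ref{lemma2.2} to handle the forcing from the approximate profile and to produce the $\alpha^{1/4}$) matches the paper. The genuine gap is in how you obtain dissipation for $\nabla\phi$.

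You propose to multiply the momentum equation by $\nabla\phi$ to generate $p'(\rho)|\nabla\phi|^2$, and then to handle the resulting time-derivative term by ``re-expressing $\nabla\Psi_t$ via $\eqref{REF}_2$ in terms of $\nabla^3\Psi$.'' But the dangerous terms are not $\Psi_t\cdot\nabla\phi$; they are the viscous contributions $\mu\triangle\Psi\cdot\nabla\phi/\rho+(\mu+\lambda)\nabla\mathrm{div}\,\Psi\cdot\nabla\phi/\rho$, which pair $\nabla^2\Psi$ with $\nabla\phi$ and carry $O(1)$ coefficients. If you bound these by Cauchy you get $\int\|\nabla\phi\|^2\leq C+C\int\|\nabla^2\Psi\|^2$, while the $\nabla^2\Psi$ estimate (testing momentum against $-\triangle\Psi/\rho$) produces $\int\|\nabla^2\Psi\|^2\leq C+C\int\|\nabla\phi\|^2$ from the pressure term $p'(\rho)\nabla\phi\cdot\triangle\Psi/\rho$. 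Both couplings have $O(1)$ constants, so the loop does not close. The cancellations you describe---integration by parts in $y$ over $\bbt$, or sign properties of the rarefaction terms---do not touch this issue.

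The paper's resolution (the ``main observation'' stressed after \eqref{12}) is different and specific: one also applies $\nabla$ to the continuity equation and multiplies by $(2\mu+\lambda)\nabla\phi/\rho^2$. Because $\nabla\phi_t$ brings in $\rho\,\nabla\mathrm{div}\,\Psi$, and because $\nabla\mathrm{div}\,\Psi$ differs from $\triangle\Psi$ by an exact curl, after scaling by $(2\mu+\lambda)$ this produces \emph{exactly} the combination
\[
\frac{\mu\,\triangle\Psi\cdot\nabla\phi+(\mu+\lambda)\,\nabla\mathrm{div}\,\Psi\cdot\nabla\phi}{\rho}
\]
with the opposite sign, plus total derivatives. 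Adding the two identities eliminates all $\nabla^2\Psi\cdot\nabla\phi$ terms, so the $\nabla\phi$ estimate (Lemma~\ref{lemma4.2}) closes \emph{before} any $\nabla^2\Psi$ control is available; only then does Lemma~\ref{lemma4.3} use it to bound $\int\|\nabla^2\Psi\|^2$. The same cancellation, one derivative higher, underlies Lemma~\ref{lemma4.4}. Your sketch needs this mechanism (or an equivalent effective-viscous-flux formulation) in place of the $y$-periodicity argument.
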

Once we proved Proposition \ref{proposition3.1},  we claim that
\begin{equation} \label{FFT}
  \int_{0}^{+\infty} \| (\nabla \phi, \nabla \Psi) \|^2 + | \frac{d}{dt}\| (\nabla \phi, \nabla \Psi) \|^2 | dt < +\infty.
\end{equation}
In fact, to prove \eqref{FFT} we only need to show $\di \int_{0}^{+\infty} | \frac{d}{dt}\| (\nabla \phi, \nabla \Psi )\|^2 | dt < +\infty.$ By Cauchy's inequality, Lemma \ref{lemma2.2}, \eqref{FFT} and the standard Elliptic estimates, one has
\begin{align*}
  &\int_{0}^{+\infty} | \frac{d}{dt}\| \nabla \phi \|^2 | dt = \int_{0}^{+\infty} | \int_{\bbt}\int_{\bbr}2\nabla\phi \cdot \nabla\phi_t dxdy | dt\\
  &= 2\int_{0}^{+\infty} | \int_{\bbt}\int_{\bbr} div(\phi_t\nabla\phi) - \phi_t\triangle\phi dxdy | dt= 2\int_{0}^{+\infty} | \int_{\bbt}\int_{\bbr} \phi_t\triangle\phi dxdy | dt\\
  &= 2\int_{0}^{+\infty} | \int_{\bbt}\int_{\bbr} (\rho div\Psi + \psi\phi_y + u\phi_x + \bar{\rho}_x\varphi +\bar{u}_x\phi)\triangle\phi dxdy | dt\\
  &\leq C\int_{0}^{+\infty}\int_{\bbt}\int_{\bbr} (\varphi_x^2 + \psi_y^2 + \phi_y^2 + \phi_x^2 + \bar{u}_x\varphi^2 + \bar{u}_x\phi^2 + (\triangle\phi)^2) dxdydt\\
  &\leq C \big[\|(\phi_0, \Psi_0)\|_2^2 + \alpha^{1/4}\big] < +\infty,
\end{align*}
and
\begin{align*}
  &\int_{0}^{+\infty} | \frac{d}{dt}\| \nabla \Psi \|^2 | dt = 2\int_{0}^{+\infty} | \int_{\bbt}\int_{\bbr} \Psi_t \cdot \triangle \Psi dxdy | dt\\
  &\leq C\int_{0}^{+\infty}\int_{\bbt}\int_{\bbr} (|\Psi_x|^2 + |\Psi_y|^2 + \bar{u}_x\varphi^2 + |\nabla\phi|^2 + \bar{u}_x\phi^2 + |\nabla^2\Psi|^2 + \bar{u}_{xx}^2) dxdydt\\
  &\leq C \big[\|(\phi_0, \Psi_0)\|_2^2 + \alpha^{1/4} + \alpha^{2/3}\big]< +\infty.
\end{align*}
Then the inequality \eqref{FFT} implies that
\begin{equation} \label{lim1}
  \lim_{t \to +\infty} \| (\nabla\phi, \nabla\Psi)(t, \cdot) \| = 0.
\end{equation}
By 2-dimensional Sobolev's inequality,
it holds that
\begin{equation} \label{2SI}
  \sup_{(x, y)\in \bbr \times \bbt} |\phi| \leq C\big[ \| \phi \|^{1/2} \| \phi_x \|^{1/2} + \| \phi_y \|^{1/2} \| \phi_{xy} \|^{1/2}\big].
\end{equation}
By \eqref{lim1}, \eqref{2SI} and Proposition \ref{proposition3.1}, we get
\begin{equation} \label{lim2}
  \lim_{t \to +\infty} \sup_{(x, y)\in \bbr \times \bbt} |(\phi, \Psi)(t, x, y)| = 0.
\end{equation}
Thus combining \eqref{lim2} with Lemma \ref{lemma2.2} (3), we have the desired time-asymptotic behavior in \eqref{AB}. Noting that
\begin{align*}
  &\|(\phi_0, \Psi_0)\| \leq \|(\rho_0 - \rho_0^r, u_0 - u_0^r, v_0)\| + \|(\bar{\rho}_0 - \rho_0^r, \bar u_0 - \bar{u}_0^r)\| \leq \|(\rho_0 - \rho_0^r, u_0 - u_0^r, v_0)\| + C \alpha,\\
  &\|(\nabla\phi_0, \nabla \Psi_0) \|_1 \leq \|(\nabla \rho_0, \nabla \textbf{u}_0)\|_1 + \|(\bar{\rho}_{0x}, \bar{u}_{0x})\| + \|(\bar{\rho}_{0xx}, \bar{u}_{0xx})\| \leq \|(\nabla \rho_0, \nabla \textbf{u}_0)\|_1 + C \alpha,
\end{align*}
for some positive constant $C$, thus all statements in Theorem \ref{theorem1} follow.

Since the proof for the local-in-time existence and uniqueness of the classical solution to \eqref{REF}-\eqref{REFI} is standard (for instance, one can refer to \cite{So}), in particular for the suitably small perturbation of the solution around the planar rarefaction wave satisfying the property \eqref{db}, the details will be omitted. To prove Proposition \ref{proposition3.1}, it suffices to show the following a priori estimates.
\begin{proposition} (a priori estimate) \label{proposition3.2}
Suppose that the reformulated problem \eqref{REF}-\eqref{REFI} has a solution $(\phi, \Psi) \in X(0, T)$ for some $T(>0)$. Then there exist positive constants $\epsilon_1$ and $C$ which are independent of  $T$ such that if
  \[
    \sup_{0 \leq t \leq T} \| (\phi, \Psi)(t) \|_2 + \alpha \leq \epsilon_1,
  \]
  then it holds 
  \begin{align}
	\begin{aligned} \label{PRO3.2}
	  &\sup_{0 \leq t \leq T} \| (\phi, \Psi)(t) \|_2^2 + \int_{0}^{T} \Big[\|\bar{u}_x^{1/2} (\phi,  \varphi)\|^2 + \|(\nabla\phi, \nabla\Psi)\|_1^2 + \|\nabla^3\Psi\|^2\Big] dt \\
	  &\leq C \big[\|(\phi_0, \Psi_0)\|_2^2 + \alpha^{1/4}\big].
	\end{aligned}
  \end{align}
\end{proposition}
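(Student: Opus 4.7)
The plan is to prove the a priori estimate via a three-level energy method (basic $L^2$, first-order, and second-order derivatives), combined in the end by a continuity/smallness argument, with the $\alpha^{1/4}$ term arising from the source terms generated by the smooth approximate wave.

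At the basic level, I would introduce the relative entropy
\[
\eta(\rho,\bar\rho,\Psi) = \rho\frac{|\Psi|^2}{2} + \big[P(\rho) - P(\bar\rho) - P'(\bar\rho)\phi\big], \qquad P(\rho)=\frac{\rho^\gamma}{\gamma(\gamma-1)},
\]
which is, under the assumed smallness, equivalent to $|\Psi|^2 + \phi^2$. Multiplying the momentum equation in \eqref{REF} by $\Psi$ and combining it with $P'(\bar\rho)/\bar\rho$ times a reshaped form of the continuity equation, and integrating over $\mathbb R\times\mathbb T$, the $y$-periodicity kills all pure $\partial_y$-derivatives, and the convective cross terms combine so as to produce, via the genuine nonlinearity of the 2-field,
\[
\frac{d}{dt}\int\eta\,dxdy + \int\!\big[\mu|\nabla\Psi|^2+(\mu+\lambda)|\mathrm{div}\,\Psi|^2\big]dxdy + c\int \bar u_x(\phi^2+\varphi^2)\,dxdy \le R_0(t),
\]
where the remainder $R_0$ consists of (i) cubic terms in $(\phi,\Psi)$ with weight $\bar u_x$, absorbed under the smallness hypothesis $\|(\phi,\Psi)\|_2+\alpha\le\epsilon_1$, and (ii) wave-error terms arising from the fact that $(\bar\rho,\bar u,0)$ solves only the Euler system (not Navier--Stokes), which contribute $\int\bar u_{xx}\varphi\,dxdy$ type integrals; by Cauchy--Schwarz and Lemma \ref{lemma2.2}, these are time-integrable with total size bounded by $C\alpha^{1/4}$ after interpolation between the $L^p$ bounds and the energy norm.

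For the first-order estimate, I would apply $\partial_x$ and $\partial_y$ to \eqref{REF}, multiply the differentiated continuity equation by $p'(\rho)\nabla\phi/\rho$ and the differentiated momentum by $\nabla\Psi$, and integrate. This produces dissipation $\|\nabla^2\Psi\|^2$. To recover the missing $\|\nabla\phi\|^2$ dissipation (the continuity equation alone gives none), I would test the momentum equation directly against $\nabla\phi$: the pressure term contributes $\int p'(\rho)|\nabla\phi|^2$ with good sign, while the $\rho\Psi_t\cdot\nabla\phi$ contribution is handled by moving the time derivative onto $\phi$ and using the continuity equation, producing a full-derivative term $\frac{d}{dt}\int\rho\Psi\cdot\nabla\phi\,dxdy$ plus controllable remainders. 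The second-order estimate is analogous: applying $\nabla^2$, testing against $\nabla^2\phi$ and $\nabla^2\Psi$, yields the $\|\nabla^3\Psi\|^2$ dissipation together with $\|\nabla^2\phi\|^2$ dissipation (the latter via the same trick of pairing $\nabla$ of the momentum equation with $\nabla^2\phi$). Nonlinear remainders are closed by the 2D anisotropic Sobolev inequality \eqref{2SI} and the bootstrap hypothesis $\|(\phi,\Psi)\|_2\le \epsilon_1$.

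Summing the three levels with suitably small weights absorbs all cross terms. Commutator-type terms arising from differentiating the transport operators also involve $\bar u_x, \bar u_{xx}, \bar u_{xxx}$, and are again bounded by $\varepsilon \cdot (\text{dissipation})+C_\varepsilon\cdot(\text{wave source})$ with the wave-source contribution finally adding up to the stated $C\alpha^{1/4}$.

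The main obstacle I expect is the interaction between the purely $x$-dependent rarefaction wave and the genuinely 2D perturbation in the presence of only partial viscous dissipation, particularly controlling mixed terms like $\int\bar u_x\phi\varphi_y\,dxdy$ and $\int\bar\rho_x\psi\phi_y\,dxdy$: direct Cauchy--Schwarz loses the weight $\bar u_x^{1/2}$. The essential cancellations the authors point to in Lemmas \ref{lemma4.2}--\ref{lemma4.5} arise precisely by integrating by parts in $y$ (exploiting periodicity), exploiting that $\partial_y\bar u=0$, and grouping terms into total derivatives so that the only surviving integrands are either of the "good sign" type $\bar u_x(\phi^2+\varphi^2)$, genuine dissipation $|\nabla\Psi|^2$, or acceptable wave sources whose time integrals are controlled by Lemma \ref{lemma2.2}. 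Once that structure is exposed, the closure of the estimate at size $\|(\phi_0,\Psi_0)\|_2^2+\alpha^{1/4}$ is standard.
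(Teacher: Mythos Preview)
Your overall architecture---relative entropy at the $L^2$ level, pairing the momentum equation with $\nabla\phi$ to extract $\int p'(\rho)|\nabla\phi|^2/\rho$, and iterating to second order---is indeed the paper's strategy. But you have misidentified the crucial obstruction and hence the nature of the cancellations in Lemmas~\ref{lemma4.2}--\ref{lemma4.5}.

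The difficulty is \emph{not} terms like $\int\bar u_x\phi\varphi_y$ or $\int\bar\rho_x\psi\phi_y$; those are harmless (bounded by $C\alpha\int\|\nabla\Psi\|^2$ directly, no integration by parts in $y$ needed). The real issue is this: when you test the momentum equation against $\nabla\phi/\rho$, the viscosity contributes
\[
-\int\frac{\mu\,\triangle\Psi+(\mu+\lambda)\nabla\mathrm{div}\,\Psi}{\rho}\cdot\nabla\phi\,dxdy,
\]
a second-order term in $\Psi$. In 1D this is $-(2\mu+\lambda)\int u_{xx}\phi_x/\rho$, which cancels exactly against $(2\mu+\lambda)$ times the term $\int u_{xx}\phi_x/\rho$ coming from the differentiated continuity equation. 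In 2D the Lam\'e operator is not a multiple of $\nabla\mathrm{div}$, so the term $\nabla\mathrm{div}\,\Psi\cdot\nabla\phi/\rho$ arising from the continuity equation (with your multiplier $p'(\rho)\nabla\phi/\rho$ it does not even carry the right weight) does not match. If you simply Cauchy--Schwarz this viscosity term you get $C\int\|\nabla^2\Psi\|^2$ on the right of the $\nabla\phi$-estimate, while the $\nabla^2\Psi$-estimate carries $C\int\|\nabla\phi\|^2$ from the pressure; the two constants are $O(1)$ in $\mu,\lambda,p'$ and there is no reason their product is small, so the loop does not close by weighted summation.

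The paper's device is algebraic: multiply the differentiated continuity equation by $\nabla\phi/\rho^2$ (not $p'(\rho)\nabla\phi/\rho$) to produce $\nabla\mathrm{div}\,\Psi\cdot\nabla\phi/\rho$, then rewrite
\[
\nabla\mathrm{div}\,\Psi=\frac{1}{2\mu+\lambda}\big[\mu\,\triangle\Psi+(\mu+\lambda)\nabla\mathrm{div}\,\Psi\big]+\frac{\mu}{2\mu+\lambda}\big[\nabla\mathrm{div}\,\Psi-\triangle\Psi\big].
\]
In 2D the bracket $\nabla\mathrm{div}\,\Psi-\triangle\Psi=(\partial_y(\psi_x-\varphi_y),-\partial_x(\psi_x-\varphi_y))$ is a perpendicular gradient of the scalar vorticity, so $\frac{\mu}{2\mu+\lambda}(\nabla\mathrm{div}\,\Psi-\triangle\Psi)\cdot\nabla\phi/\rho$ is an exact divergence (the four total-derivative terms in \eqref{11}). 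Multiplying the continuity identity by $(2\mu+\lambda)$ and adding it to the momentum-against-$\nabla\phi$ identity then makes the second-order viscosity terms cancel \emph{exactly}; see the remark between \eqref{12} and \eqref{13}. The same mechanism is used one order higher in Lemma~\ref{lemma4.4}. This is the ``physical structure'' the authors refer to, and it is what allows the $\nabla\phi$ dissipation to be obtained independently of $\|\nabla^2\Psi\|^2$. Your proposal as written does not contain this step, and the ``controllable remainders'' you mention after testing against $\nabla\phi$ are precisely where the argument would stall.
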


%
%
\section{A Priori Estimates}
\setcounter{equation}{0}
In this section, we will prove Proposition \ref{proposition3.2}.
Throughout this section we suppose that $(\rho_\pm, u_\pm)$ satisfying \eqref{RC} with $\rho_\pm >0, u_\pm \in \bbr$ being fixed and that the problem \eqref{REF}-\eqref{REFI} has a solution $(\phi, \Psi) \in X(0, T)$ for some $T(>0)$. For simplicity, we write $C$ as generic positive constants which may depend on $(\rho_\pm, u_\pm)$ but are independent of $T$, and set $E = \di\sup_{0 \leq t \leq T} \| (\phi, \Psi)(t) \|_2$.

\begin{lemma} \label{lemma4.1}
  There exists a positive constant $C$ such that for $0 \leq t \leq T$,
  \begin{align}
    \begin{aligned} \label{LEM4.1}
    \sup_{0 \leq t \leq T} \| (\phi, \Psi)(t) \|^2 + \int_{0}^{T}\big[ \|\bar{u}_x^{1/2} (\phi,  \varphi) \|^2 + \|\nabla \Psi\|^2\big] dt
    \leq C \big[\| (\phi_0, \Psi_0) \|^2 + \alpha^{1/4}\big].
    \end{aligned}
  \end{align}
\end{lemma}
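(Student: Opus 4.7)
The plan is a relative-entropy type $L^2$ energy estimate. Introduce the relative pressure potential
\[
\Phi(\rho,\bar\rho) := \frac{\rho^\gamma - \bar\rho^\gamma}{\gamma(\gamma-1)} - \frac{\bar\rho^{\gamma-1}}{\gamma-1}(\rho-\bar\rho),
\]
which by \eqref{db} is uniformly comparable to $\phi^2$, and the functional
\[
\mathcal{F}(t) := \int_{\bbt}\int_{\bbr}\Big[\tfrac{\rho|\Psi|^2}{2} + \Phi(\rho,\bar\rho)\Big]\,dx\,dy \ \sim\ \|(\phi,\Psi)(t)\|^2.
\]
The goal is an identity of the form
\[
\frac{d}{dt}\mathcal{F}(t) + \mu\|\nabla\Psi\|^2 + (\mu+\lam)\|div\Psi\|^2 + \int_{\bbt}\int_{\bbr}\bar u_x\,G(\phi,\varphi)\,dx\,dy = R(t),
\]
with $G$ a positive-definite quadratic form in $(\phi,\varphi)$ and $\int_0^T|R(t)|\,dt\leq C\alpha^{1/4}$; integrating in time then gives \eqref{LEM4.1} after using $\mathcal F(t)\sim\|(\phi,\Psi)(t)\|^2$.

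I would derive this by multiplying the momentum equation in \eqref{REF} by $\Psi$ and the continuity equation in \eqref{REF} by the enthalpy difference $h(\rho)-h(\bar\rho)$, where $h(\rho):=\rho^{\gamma-1}/(\gamma-1)$, then summing and integrating over $\bbr\times\bbt$. Periodicity in $y$ kills all $y$-boundary contributions. Integration by parts converts $\int p'(\rho)\nabla\phi\cdot\Psi$ into $-\int p'(\rho)(div\Psi)\phi$ plus wave and cubic remainders, which then combines with the $\rho\,div\Psi$ piece from the continuity equation to reconstruct $\partial_t\Phi$. The Euler identity \eqref{ANS} is used to rewrite $\bar u_t,\bar\rho_t$ in terms of $\bar u_x,\bar\rho_x$. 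The leading wave contributions -- from the $\rho\bar u_x\varphi$ multiplier and from the error $(p'(\rho)-\frac{\rho}{\bar\rho}p'(\bar\rho))\bar\rho_x$ -- collapse, thanks to the 2-rarefaction relation $\bar u_x = \sqrt{p'(\bar\rho)}\,\bar\rho_x/\bar\rho$ implied by $z_2(\bar\rho,\bar u)\equiv\mathrm{const}$, into the single integrand $\bar u_x G(\phi,\varphi)$. Taylor-expanding around $\bar\rho$ (mirroring the Matsumura--Nishihara one-dimensional calculation pointwise in $y$) gives $G = c_1\varphi^2 + c_2\phi^2 + c_3\phi\varphi + O(\alpha)$ with $c_1,c_2>0$ and $c_3^2<4c_1c_2$ for $\alpha\ll 1$, hence $\int\bar u_x G\gtrsim\|\bar u_x^{1/2}(\phi,\varphi)\|^2$.

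The remainder $R(t)$ contains cubic terms $\int\bar u_x\,O(|\phi|^3+|\varphi|^3)$ (absorbed once $E\ll 1$), quadratic errors $\lesssim\alpha\|(\phi,\Psi)\|^2$ from $\bar\rho_x\bar u_x$-combinations (absorbed into $\mathcal F$ via Gronwall), and the decisive linear source $(2\mu+\lam)\int\bar u_{xx}\varphi\,dx\,dy$. For the last, I would exploit $y$-independence of $\bar u_{xx}$ together with Fubini and a weighted Cauchy--Schwarz:
\[
\bigg|\int_{\bbt}\int_{\bbr}\bar u_{xx}\varphi\,dx\,dy\bigg| \leq \eta\,\|\bar u_x^{1/2}\varphi\|^2 + C_\eta\int_{\bbr}\frac{\bar u_{xx}(t,x)^2}{\bar u_x(t,x)}\,dx,
\]
absorbing the first piece on the left and estimating the second via the Burgers characteristic formulas $w_x = w_0'(\xi)/(1+tw_0'(\xi))$, $w_{xx} = w_0''(\xi)/(1+tw_0'(\xi))^3$ behind Lemma \ref{lemma2.1}, under the change of variables $x=\xi+tw_0(\xi)$.

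The hardest step I anticipate is exactly this last bound, $\int_0^T\int_{\bbr}\bar u_{xx}^2/\bar u_x\,dx\,dt\leq C\alpha^{1/4}$: the integrand is singular where $\bar u_x$ vanishes (outside the rarefaction fan), so the $L^p$ bounds of Lemma \ref{lemma2.2} do not suffice by themselves, and one must work in characteristic coordinates and split the $\xi$-integration according to the size of $1+tw_0'(\xi)$ -- the $\alpha^{1/4}$ exponent will be the price of this interpolation. A secondary but still delicate difficulty is ensuring the uniform positive-definiteness of $G$ over $\bar\rho\in[\rho_-,\rho_+]$ once the cross-term from the $\bar\rho_x$-error is included, for which the smallness of $\alpha$ is again essential.
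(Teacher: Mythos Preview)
Your relative-entropy setup is essentially the paper's: multiplying $\eqref{REF}_2$ by $\Psi$ and combining with the evolution of the potential $\Phi(\rho,\bar\rho)$ yields exactly the energy identity you describe. The cancellations are in fact cleaner than you anticipate---after the $(p'(\rho)-\tfrac{\rho}{\bar\rho}p'(\bar\rho))\bar\rho_x$ pieces cancel between the two equations, the damping integrand comes out diagonal,
\[
\bar u_x\,G=\rho\bar u_x\varphi^2+\bar u_x\big(p(\rho)-p(\bar\rho)-p'(\bar\rho)\phi\big)\ \sim\ \bar u_x(\varphi^2+\phi^2),
\]
with no cross term and no separate smallness argument needed for its positivity. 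There are also no cubic or Gronwall-type quadratic errors: the \emph{only} remainder is the linear source $(2\mu+\lam)\int\bar u_{xx}\varphi$.

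The genuine gap is in how you propose to treat that source. Your weighted Cauchy--Schwarz
\[
\Big|\int\bar u_{xx}\varphi\Big|\ \leq\ \eta\,\|\bar u_x^{1/2}\varphi\|^2+C_\eta\int_{\bbr}\frac{\bar u_{xx}^2}{\bar u_x}\,dx
\]
cannot close, because the second piece is \emph{not} time-integrable. In characteristic coordinates $x=\xi+tw_0(\xi)$ one has
\[
\int_{\bbr}\frac{w_{xx}^2}{w_x}\,dx=\int_{\bbr}\frac{(w_0'')^2}{w_0'\,(1+tw_0')^4}\,d\xi
=4\int_{\bbr}\frac{w_0'\tanh^2\xi}{(1+tw_0')^4}\,d\xi,
\]
and the substitution $s=\tanh\xi$ turns this into $2\tilde w\int_{-1}^{1}s^2\big(1+\tfrac{t\tilde w}{2}(1-s^2)\big)^{-4}ds\sim C\min(\tilde w,t^{-1})$. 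The tails $|s|\to1$, where $\bar u_x$ is exponentially small, contribute a term of exact order $t^{-1}$, so $\int_0^{T}\!\!\int_{\bbr}\bar u_{xx}^2/\bar u_x\,dx\,dt\sim\log(T\tilde w)\to\infty$; no splitting in $\xi$ or interpolation rescues this, and the claimed bound $\leq C\alpha^{1/4}$ is false. The paper's fix is to borrow from the \emph{viscous dissipation} $\|\varphi_x\|^2$ rather than from the damping $\|\bar u_x^{1/2}\varphi\|^2$: use the one-dimensional Sobolev bound $\|\varphi(\cdot,y)\|_{L^\infty_x}\leq C\|\varphi\|_{L^2_x}^{1/2}\|\varphi_x\|_{L^2_x}^{1/2}$ against $\|\bar u_{xx}\|_{L^1_x}$, apply Young to split off $\tfrac14\|\varphi_x\|^2$, and be left with $C\|\bar u_{xx}\|_{L^1}^{4/3}\|\varphi\|^{2/3}$. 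The interpolation $\|\bar u_{xx}\|_{L^1}\leq\min(\alpha,(1+t)^{-1})\leq\alpha^{1/8}(1+t)^{-7/8}$ from Lemma~\ref{lemma2.2} then makes this time-integrable, and one more Young inequality against $\sup_t\|\varphi\|^2$ produces the $\alpha^{1/4}$.
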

\begin{proof}
  First, multiplying the second equation of \eqref{REF} by $\Psi$ gives
  \begin{align}
    \begin{aligned} \label{01}
      &(\frac{1}{2}\rho|\Psi|^2)_t + \frac{1}{2}div(\rho\textbf{u}|\Psi|^2) - \mu div(\varphi\nabla\varphi + \psi\nabla\psi) - (\mu + \lam) div(\Psi div\Psi) \\
      &\quad + \rho\bar{u}_x\varphi^2 + \mu|\nabla\Psi|^2 + (\mu + \lam) (div\Psi)^2 + p'(\rho)\nabla\phi\cdot\Psi + (p'(\rho) - \frac{\rho}{\bar{\rho}} p'(\bar{\rho})) \bar{\rho}_x\varphi \\
      &= (2\mu + \lam)\bar{u}_{xx}\varphi.
    \end{aligned}
  \end{align}
Define the potential energy by
  \[
    \Phi(\rho , \bar{\rho}) = \int_{\bar{\rho}}^{\rho} \frac{p(s) - p(\bar{\rho})}{s^2} ds
 = \frac{1}{(\gamma-1)\rho}(p(\rho) - p(\bar{\rho}) - p'(\bar{\rho})\phi).
\]
Direct computations yield that
  \begin{align}
    \begin{aligned} \label{02}
      &(\rho\Phi)_t + div(\rho\textbf{u}\Phi + (p(\rho) - p(\bar{\rho}))\Psi) + \bar{u}_x(p(\rho) - p(\bar{\rho}) - p'(\bar{\rho})\phi) - p'(\rho)\nabla\phi\cdot\Psi \\
      &\quad - (p'(\rho) - \frac{\rho}{\bar{\rho}} p'(\bar{\rho})) \bar{\rho}_x\varphi = 0.
    \end{aligned}
  \end{align}
Combining \eqref{01} and \eqref{02} together and then integrating the resulted equation over $[0, ~t]\times\bbr\times\bbt$ imply
  \begin{align}
	\begin{aligned} \label{03}
	  \|(\phi, \Psi)\|^2(t) + \int_{0}^{t} \big[\|\bar{u}_x^{1/2} (\phi, \varphi) \|^2 + \|\nabla \Psi\|^2\big] dt
	  \leq C \| (\phi_0, \Psi_0) \|^2 + C \int_{0}^{t}\int_{\bbt}\int_{\bbr}|\bar{u}_{xx}\varphi| dxdydt.
	\end{aligned}
  \end{align}
  By the one-dimensional Sobolev's inequality, H\"{o}lder's inequality, Young's inequality and Lemma \ref{lemma2.2}, one has
  \begin{align}
  \begin{aligned} \label{04}
    & \int_{0}^{t}\int_{\bbt}\int_{\bbr}|\bar{u}_{xx}\varphi|dxdydt
    \leq  \int_{0}^{t}\int_{\bbt}\|\bar{u}_{xx}\|_{L_x^1}\|\varphi\|_{L_x^\infty}dydt \\
    &\leq C \int_{0}^{t}\int_{\bbt}\|\bar{u}_{xx}\|_{L_x^1}\|\varphi\|_{L_x^2}^{1/2}\|\varphi_x\|_{L_x^2}^{1/2}dydt \\
    &\leq \frac{1}{4}\int_{0}^{t}\int_{\bbt}\|\varphi_x\|_{L_x^2}^2dydt + C \int_{0}^{t}\int_{\bbt}\|\bar{u}_{xx}\|_{L_x^1}^{4/3}\|\varphi\|_{L_x^2}^{2/3}dydt \\
    &\leq \frac{1}{4}\int_{0}^{t}\|\varphi_x\|^2dt + C \int_{0}^{t}(\int_{\bbt}\|\bar{u}_{xx}\|_{L_x^1}^2dy)^{2/3}(\int_{\bbt}\|\varphi\|_{L_x^2}^2dy)^{1/3}dt \\
    &\leq \frac{1}{4}\int_{0}^{t}\|\varphi_x\|^2dt + C \int_{0}^{t} \|\bar{u}_{xx}\|_{L_x^1}^{4/3} \|\varphi\|^{2/3} dt \\
    &\leq \frac{1}{4}\int_{0}^{t}\|\varphi_x\|^2dt + C \sup_{0 \leq t \leq T} \|\varphi\|^{2/3} \int_{0}^{t} \|\bar{u}_{xx}\|_{L_x^1}^{4/3} dt \\
    &\leq \frac{1}{4}\int_{0}^{t}\|\varphi_x\|^2dt + C \sup_{0 \leq t \leq T} \|\varphi\|^{2/3} \int_{0}^{t} (\min(\alpha, (1+t)^{-1}))^{4/3} dt \\
    &\leq \frac{1}{4}\int_{0}^{t}\|\varphi_x\|^2dt + C \sup_{0 \leq t \leq T} \|\varphi\|^{2/3} \int_{0}^{t} (\alpha^{1/8}(1+t)^{-7/8})^{4/3} dt \\
    &\leq \frac{1}{4}\int_{0}^{t}\|\varphi_x\|^2dt + C \alpha^{1/6} \sup_{0 \leq t \leq T} \|\varphi\|^{2/3} \leq \frac{1}{4}\int_{0}^{t}\|\varphi_x\|^2dt + \frac{1}{4}\sup_{0 \leq t \leq T} \|\varphi\|^2 + C \alpha^{1/4}. \\
  \end{aligned}
  \end{align}
Thus \eqref{03} and \eqref{04} yield \eqref{LEM4.1} in Lemma \ref{lemma4.1}.
\end{proof}

\begin{lemma} \label{lemma4.2}
  There exists a positive constant $C$ such that for $0 \leq t \leq T$,
  \begin{align}
	\begin{aligned} \label{LEM4.2}
 	  &\|(\phi, \Psi)\|^2(t) + \|\nabla\phi\|^2 (t)+ \int_{0}^{t} \big[\|\bar{u}_x^{1/2} (\phi, \varphi)\|^2 + \|(\nabla\phi, \nabla\Psi)\|^2\big] dt\\
	  &\leq C \big[\|(\phi_0, \Psi_0)\|^2+ \|\nabla\phi_0\|^2 + \alpha^{1/4}\big] + C(E + \alpha) \int_{0}^{t} \big[\|\bar{u}_x^{1/2}(\phi,\varphi)\|^2 + \|(\nabla\phi, \nabla\Psi)\|^2\big] dt.
	\end{aligned}
  \end{align}
\end{lemma}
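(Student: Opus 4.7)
\textbf{Plan for Lemma \ref{lemma4.2}.}

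Since the continuity equation $(\ref{REF})_1$ carries no parabolic smoothing for $\phi$, the bound on $\|\nabla\phi\|^2$ must be mined from the pressure gradient $p'(\rho)\nabla\phi$ in the momentum equation. I would take the $L^2(\bbr\times\bbt)$ inner product of $(\ref{REF})_2$ with $\nabla\phi$, producing
\[
\int\rho\Psi_t\cdot\nabla\phi + \int p'(\rho)|\nabla\phi|^2 + (\text{convective/drag terms}) = \int\left[\mu\triangle\Psi+(\mu+\lambda)\nabla \text{div}\,\Psi\right]\cdot\nabla\phi + \int(2\mu+\lambda)\bar u_{xx}\phi_x,
\]
with the second left-hand term giving the desired dissipation $\geq c_0\|\nabla\phi\|^2$ because $p'(\rho) \geq c_0 > 0$ by \eqref{db}.

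For $\int\rho\Psi_t\cdot\nabla\phi$ I would write
\[
\int\rho\Psi_t\cdot\nabla\phi = \frac{d}{dt}\int\rho\Psi\cdot\nabla\phi - \int\rho_t\Psi\cdot\nabla\phi + \int \text{div}(\rho\Psi)\phi_t,
\]
and eliminate $\rho_t, \phi_t$ using the continuity equation $(\ref{REF})_1$. The leading contribution of the last term is $-\int\rho^2(\text{div}\,\Psi)^2$, whose absolute value is controlled by $\|\nabla\Psi\|^2$ already bounded in Lemma \ref{lemma4.1}; all remaining pieces are bilinear in the perturbation with either an $E$ or an $\alpha$ factor.

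The crux is the second-order viscous pairing, where naive Cauchy--Schwarz would produce the still unavailable $\|\nabla^2\Psi\|^2$. Using the 2D identity $\triangle\Psi = \nabla \text{div}\,\Psi - \nabla^\perp \text{curl}\,\Psi$ together with the $\bbr\times\bbt$-identity $\int\nabla^\perp f\cdot\nabla g\,dxdy = 0$ (two integrations by parts that cancel, valid because $y\in\bbt$ and $x\in\bbr$ with decay), the viscous pairing reduces to $(2\mu+\lambda)\int\nabla \text{div}\,\Psi\cdot\nabla\phi$. Here I would substitute $\text{div}\,\Psi = -\frac{1}{\rho}(\phi_t+u\phi_x+\rho_y\psi+\bar\rho_x\varphi+\bar u_x\phi)$ from $(\ref{REF})_1$ directly. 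Using $\nabla\phi_t\cdot\nabla\phi = \frac{1}{2}\partial_t|\nabla\phi|^2$, the $\phi_t$-piece produces $-\frac{2\mu+\lambda}{2}\frac{d}{dt}\int\frac{|\nabla\phi|^2}{\rho}$, whose time integral yields the pointwise $\|\nabla\phi(t)\|^2$ on the left-hand side. Each remaining substituted piece contains $\nabla\phi_x$ or $\nabla\phi_y$, which are rewritten via $\nabla\phi_x\cdot\nabla\phi = \frac{1}{2}\partial_x|\nabla\phi|^2$ (similarly in $y$), and a further integration by parts leaves only first-order quantities paired with $E+\alpha$ coefficients. This is the ``essential cancellation from the physical structure'' referenced in the introduction.

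Integrating in $t$, absorbing the endpoint cross term $|\int\rho\Psi\cdot\nabla\phi(t)|$ by Young into $\frac{1}{4}\rho_+\|\Psi(t)\|^2$ (controlled by Lemma \ref{lemma4.1}) and $\frac{1}{4}\rho_+\|\nabla\phi(t)\|^2$ (absorbed on the left), and handling the forcing $(2\mu+\lambda)\int\bar u_{xx}\phi_x$ exactly as in \eqref{04} to produce the $C\alpha^{1/4}$ term, yields \eqref{LEM4.2}. The principal obstacle is entirely the viscous step outlined above; the continuity-equation substitution is the essential workaround avoiding uncontrolled second-order derivatives. The secondary difficulty is the bookkeeping needed to verify that every bilinear error product really carries either the small factor $E+\alpha$ (absorbed into the right-hand side) or the $\alpha^{1/4}$ factor (from the $\bar u_{xx}$ source).
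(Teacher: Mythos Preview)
Your plan is correct and implements the same cancellation as the paper, but reaches it from the opposite end. The paper (i) applies $\nabla$ to the continuity equation and multiplies by $\nabla\phi/\rho^2$, (ii) multiplies the momentum equation by $\nabla\phi/\rho$, then (iii) multiplies (i) by $2\mu+\lambda$ and adds: the viscous combination $\rho^{-1}\big[(\mu+\lambda)\nabla\,\mathrm{div}\,\Psi+\mu\triangle\Psi\big]\cdot\nabla\phi$ from (ii) cancels exactly against the term $\rho^{-1}\nabla\,\mathrm{div}\,\Psi\cdot\nabla\phi$ from (i) (after the latter is split componentwise into the displayed divergence-form terms plus the matching combination). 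Your route instead keeps the momentum equation as the primary identity, uses the Hodge decomposition $\triangle\Psi=\nabla\,\mathrm{div}\,\Psi-\nabla^\perp\mathrm{curl}\,\Psi$ together with $\int\nabla^\perp f\cdot\nabla g\,dxdy=0$ to collapse the viscous pairing to $(2\mu+\lambda)\int\nabla\,\mathrm{div}\,\Psi\cdot\nabla\phi$, and then feeds the continuity equation into $\mathrm{div}\,\Psi$. Algebraically the two computations are equivalent: your curl term that integrates to zero is exactly what the paper writes out as the four explicit divergence-form terms in its identity for the differentiated continuity equation. Your formulation has the conceptual advantage of making transparent \emph{why} the cancellation works (the curl part decouples from $\nabla\phi$), while the paper's presentation has the bookkeeping advantage that every remainder term is displayed explicitly from two multiplier identities, so the $E+\alpha$ smallness of each one can be checked by inspection. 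Either way one lands on the same estimate \eqref{LEM4.2}.
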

\begin{proof}
 Multiplying the second equation of \eqref{REF} by $\frac{\nabla\phi}{\rho}$ gives
  \begin{align}
    \begin{aligned} \label{12}
      &(\Psi \cdot \nabla\phi)_t - div(\Psi\phi_t) -(u\phi\psi_y)_x + (u\phi\psi_x)_y + \frac{p'(\rho)}{\rho}|\nabla\phi|^2 \\
      &\quad - \frac{(\mu + \lam)\nabla div\Psi\cdot\nabla\phi+\mu\triangle\Psi\cdot\nabla\phi}{\rho} \\
      &= \rho(div\Psi)^2 + \psi(\phi_y\varphi_x - \phi_x\varphi_y) + \phi(\varphi_y\psi_x - \varphi_x\psi_y) - \bar{u}_x(\phi\varphi_x - \varphi\phi_x) \\
      &\quad + \bar{\rho}_x \varphi div\Psi- (\frac{p'(\rho)}{\rho} - \frac{p'(\bar{\rho})}{\bar{\rho}}) \bar{\rho}_x\phi_x + \frac{2\mu + \lam}{\rho}\bar{u}_{xx} \phi_x.
    \end{aligned}
  \end{align}
  Applying the operator $\nabla$ to the first equation of \eqref{REF} and then multiplying the resulted equation by $\nabla\phi/\rho^2$ yield
  \begin{align}
	\begin{aligned} \label{11}
	  &(\frac{|\nabla\phi|^2}{2\rho^2})_t + div(\frac{\textbf{u} |\nabla\phi|^2}{2\rho^2}) + \frac{\bar{u}_x \phi_x^2}{\rho^2} +\frac{\nabla{\rm div} \Psi\cdot \nabla\phi}{\rho}\\
	  &= - \frac{\phi_x \nabla\phi\cdot\nabla\varphi}{\rho^2} - \frac{\phi_y \nabla\phi\cdot\nabla\psi}{\rho^2} + \frac{|\nabla\phi|^2 div\Psi}{2\rho^2} - \frac{(\bar{\rho}_{xx}\varphi + \bar{u}_{xx}\phi) \phi_x}{\rho^2} - \frac{\bar{\rho}_x \nabla\phi\cdot\nabla\varphi}{\rho^2} \\
	  &\quad - \frac{\bar{\rho}_x\phi_x div\Psi}{\rho^2}  + \frac{\bar{u}_x|\nabla\phi|^2}{2\rho^2},
	\end{aligned}
  \end{align}
 By the fact that
  $$
  \begin{array}{ll}
 \di  \frac{\nabla{\rm div} \Psi\cdot \nabla\phi}{\rho}=\frac{(\mu + \lam)\nabla\phi\cdot\nabla div\Psi + \mu\nabla\phi\cdot\triangle\Psi}{(2\mu + \lam)\rho} \\
  \di + \frac{\mu}{2\mu + \lam}((\frac{\phi_x\psi_x}{\rho})_y - (\frac{\phi_y\psi_x}{\rho})_x + (\frac{\phi_y\varphi_y}{\rho})_x - (\frac{\phi_x\varphi_y}{\rho})_y)+\frac{\mu\bar{\rho}_x\phi_y (\varphi_y - \psi_x)}{(2\mu + \lam)\rho^2},
  \end{array}
  $$
we observe that both last terms on the left hand side of \eqref{11} and \eqref{12} will be cancelled if we multiply the equality \eqref{11} by $2\mu + \lam$ and then add the two resulted equalities together, which is one of the main observations of the present paper and quite important to close a priori estimates. Otherwise, the first order derivative estimate will depend on the second order one, and so on, then how to close a priori estimates is the main issue. Here we fully used the physical structures of the system for the above cancellations to close the a priori estimates.
 
Now multiplying the equality \eqref{11} by $2\mu + \lam$ ,and then adding the resulted equation and \eqref{12} together, and then integrating the final equation over $[0, ~t]\times\bbr\times\bbt$, we have
  \begin{align}
    \begin{aligned} \label{13}
      &\Big[\int_{\bbt}\int_{\bbr} \big(\frac{2\mu + \lam}{2\rho^2}|\nabla\phi|^2 + \Psi\cdot\nabla\phi \big)dxdy\Big]\Big|_0^t + \int_{0}^{t}\int_{\bbt}\int_{\bbr}\Big[ \frac{2\mu + \lam}{\rho^2}\bar{u}_x \phi_x^2 + \frac{p'(\rho)}{\rho}|\nabla\phi|^2 \Big]dxdydt \\
      &= \int_{0}^{t}\int_{\bbt}\int_{\bbr} \Big[\rho (div\Psi)^2 - \frac{2\mu + \lam}{\rho^2}\phi_x \nabla\phi \cdot \nabla\varphi - \frac{2\mu+ \lam}{\rho^2}\phi_y \nabla\phi \cdot \nabla\psi + \frac{2\mu +\lam}{2\rho^2}|\nabla\phi|^2 div\Psi \\
      &\quad - \frac{2\mu + \lam}{\rho^2}(\bar{\rho}_{xx}\varphi + \bar{u}_{xx}\phi) \phi_x - \frac{2\mu + \lam}{\rho^2}\bar{\rho}_x \nabla\phi\cdot\nabla\varphi - \frac{2\mu + \lam}{\rho^2}\bar{\rho}_x\phi_x div\Psi - \frac{\mu}{\rho^2}\bar{\rho}_x\phi_y (\varphi_y - \psi_x) \\
      &\quad + \frac{2\mu + \lam}{2\rho^2}\bar{u}_x|\nabla\phi|^2 + \psi(\phi_y\varphi_x - \phi_x\varphi_y) + \phi(\varphi_y\psi_x - \varphi_x\psi_y) - \bar{u}_x(\phi\varphi_x - \varphi\phi_x) + \bar{\rho}_x \varphi div\Psi \\
      &\quad - (\frac{p'(\rho)}{\rho} - \frac{p'(\bar{\rho})}{\bar{\rho}}) \bar{\rho}_x\phi_x + \frac{2\mu + \lam}{\rho}\bar{u}_{xx} \phi_x \Big]dxdydt.
    \end{aligned}
  \end{align}
  The combination of \eqref{LEM4.1} and \eqref{13} leads to
  \begin{align}
    \begin{aligned} \label{14}
      &\| (\phi, \Psi)\|^2(t) + \|\nabla\phi\|^2(t) + \int_{0}^{t} \big[\|\bar{u}_x^{1/2} (\phi, \varphi,\phi_x) \|^2 + \|\nabla(\phi, \Psi)\|^2\big] dt\\
      &\leq C\big[\| (\phi_0, \Psi_0) \|^2 + \|\nabla\phi_0\|^2 + \alpha^{1/4}\big] + C \int_{0}^{t}\int_{\bbt}\int_{\bbr} \Big[|\frac{2\mu + \lam}{\rho^2}\phi_x \nabla\phi \cdot \nabla\varphi| + |\frac{2\mu + \lam}{\rho^2}\bar{\rho}_{xx}\varphi\phi_x| \\
      &\quad + |\frac{2\mu + \lam}{\rho^2}\bar{\rho}_x \nabla\phi\cdot\nabla\varphi| + |\frac{2\mu + \lam}{2\rho^2}\bar{u}_x|\nabla\phi|^2| + |\psi\phi_y\varphi_x| + |\phi\varphi_y\psi_x| + |\bar{u}_x\phi\varphi_x| \\
      &\quad + |(\frac{p'(\rho)}{\rho} - \frac{p'(\bar{\rho})}{\bar{\rho}}) \bar{\rho}_x\phi_x| + |\frac{2\mu + \lam}{\rho}\bar{u}_{xx} \phi_x|\Big]dxdydt + C \int_{0}^{t}\int_{\bbt}\int_{\bbr} \Big[|\frac{2\mu+ \lam}{\rho^2}\phi_y \nabla\phi \cdot \nabla\psi| \\
      &\quad + |\frac{2\mu +\lam}{2\rho^2}|\nabla\phi|^2 div\Psi| + |\frac{2\mu + \lam}{\rho^2}\bar{u}_{xx}\phi\phi_x| + |\frac{2\mu + \lam}{\rho^2}\bar{\rho}_x\phi_x div\Psi| + |\frac{\mu}{\rho^2}\bar{\rho}_x\phi_y (\varphi_y - \psi_x)| \\
      &\quad + |\psi\phi_x\varphi_y| + |\phi\varphi_x\psi_y| + |\bar{u}_x\varphi\phi_x| + |\bar{\rho}_x \varphi div\Psi|\Big] dxdydt \\
      &:= C \big[\| \phi_0, \Psi_0 \|^2 + \|\nabla\phi_0\|^2 + \alpha^{1/4}\big] + \sum_{i=1}^9 G_i + H.
    \end{aligned}
  \end{align}
  We estimate each term on the right-hand side of \eqref{14}. By Young's inequality and interpolation inequality, it holds that
  \begin{align*}
  G_1 &= C \int_{0}^{t} \| \frac{2\mu + \lam}{\rho^2}\phi_x \nabla\phi \cdot \nabla\varphi \|_{L^1} dt 
       \leq \frac{1}{80} \int_{0}^{t} \|\phi_x\|^2 dt + C \int_{0}^{t} (\|\nabla\phi\|_{L^4}^4 + \|\nabla\varphi\|_{L^4}^4) dt \\
      &\leq \frac{1}{80} \int_{0}^{t} \|\phi_x\|^2 dt + C \int_{0}^{t} (\|\nabla\phi\|^2 \|\nabla\phi\|_1^2 + \|\nabla\varphi\|^2 \|\nabla\varphi\|_1^2) dt \\
      &\leq \frac{1}{80} \int_{0}^{t} \|\phi_x\|^2 dt + CE \int_{0}^{t} (\|\nabla\phi\|^2 + \|\nabla\varphi\|^2) dt.
  \end{align*}
  By Young's inequality, interpolation inequality and Lemma \ref{lemma2.2}, one has
  \begin{align*}
  G_2 &= C \int_{0}^{t} \|\frac{2\mu + \lam}{\rho^2}\bar{\rho}_{xx}\varphi\phi_x\|_{L^1} dt 
       \leq \frac{1}{80} \int_{0}^{t} \|\phi_x\|^2 dt +  C \int_{0}^{t}\|\bar{\rho}_{xx}\|_{L^\infty}^2 \|\varphi\|^2 dt \\
      &\leq \frac{1}{80} \int_{0}^{t} \|\phi_x\|^2 dt + C \int_{0}^{t} \alpha^{1/2}(1+t)^{-3/2} \|\varphi\|^2  dt \\
      &\leq \frac{1}{80} \int_{0}^{t} \|\phi_x\|^2 dt + C\alpha \sup_{0\leq t\leq T} \|\psi\|^2.
  \end{align*}
  It follows from Cauchy's inequality and Lemma \ref{lemma2.2} that
  \begin{align*}
  G_3 &= C \int_{0}^{t} \|\frac{2\mu + \lam}{\rho^2}\bar{\rho}_x \nabla\phi\cdot\nabla\varphi\|_{L^1} dt
       \leq \frac{1}{80} \int_{0}^{t} \|\nabla\varphi\|^2 dt + C \int_{0}^{t} \|\bar{\rho}_x \nabla\phi\|^2 dt \\
      &\leq \frac{1}{80} \int_{0}^{t} \|\nabla\varphi\|^2 dt + C \alpha \int_{0}^{t} \|\nabla\phi\|^2 dt.
  \end{align*}
  It follows from Lemma \ref{lemma2.2} that
  \begin{align*}
  G_4 = C \int_{0}^{t} \|\frac{2\mu + \lam}{2\rho^2}\bar{u}_x|\nabla\phi|^2\|_{L^1} dt      
      \leq C \alpha \int_{0}^{t} \|\nabla\phi\|^2 dt.
  \end{align*}
 Similar to estimate $G_1$, we can estimate $G_5$ and $G_6$ as 
  \begin{align*}
  G_5 = C \int_{0}^{t} \| \psi\phi_y\varphi_x \|_{L^1} dt 
     \leq \frac{1}{80} \int_{0}^{t} \|\phi_y\|^2 dt + CE \int_{0}^{t} (\|\nabla\psi\|^2 + \|\varphi_x\|^2) dt,
  \end{align*}
  and
  \begin{align*}
  G_6 = C \int_{0}^{t} \| \phi\varphi_y\psi_x \|_{L^1} dt 
     \leq \frac{1}{80} \int_{0}^{t} \|\varphi_y\|^2 dt + CE \int_{0}^{t} (\|\nabla\phi\|^2 + \|\psi_x\|^2) dt.
  \end{align*}
  It follows from Cauchy's inequality and Lemma \ref{lemma2.2} that
  \begin{align*}
  G_7 &= C \int_{0}^{t} \|\bar{u}_x\phi\varphi_x\|_{L^1} dt
      \leq \frac{1}{80} \int_{0}^{t} \|\varphi_x\|^2 dt + C \int_{0}^{t} \|\bar{u}_x \phi\|^2 dt \\
      &\leq \frac{1}{80} \int_{0}^{t} \|\varphi_x\|^2 dt + C \alpha \int_{0}^{t} \|\bar{u}_x^{1/2} \phi\|^2 dt,
  \end{align*}
  \begin{align*}
  G_8 &= C \int_{0}^{t} \|(\frac{p'(\rho)}{\rho} - \frac{p'(\bar{\rho})}{\bar{\rho}}) \bar{\rho}_x\phi_x\|_{L^1} dt
      \leq \frac{1}{80} \int_{0}^{t} \|\phi_x\|^2 dt + C \int_{0}^{t} \|\bar{\rho}_x\|^2 dt \\
      &\leq \frac{1}{80} \int_{0}^{t} \|\phi_x\|^2 dt + C \int_{0}^{t} (\alpha^{1/3}(1 + t)^{-2/3})^2 dt 
      \leq \frac{1}{80} \int_{0}^{t} \|\phi_x\|^2 dt + C \alpha^{2/3},
  \end{align*}
  and
  \begin{align*}
  G_9 &= C \int_{0}^{t} \|\frac{2\mu + \lam}{\rho}\bar{u}_{xx} \phi_x\|_{L^1} dt
      \leq \frac{1}{80} \int_{0}^{t} \|\phi_x\|^2 dt + C \int_{0}^{t} \|\bar{u}_{xx}\|^2 dt \\
      &\leq \frac{1}{80} \int_{0}^{t} \|\phi_x\|^2 dt + C \alpha^{2/3}.
  \end{align*}
Then all  the terms in $H$ can be analyzed similarly and the details will be omitted for brevity.
Substituting the estimates of $G_1-G_9$ and $H$ into \eqref{14}, we can prove \eqref{LEM4.2} in Lemma \ref{lemma4.2}.
\end{proof}

\begin{lemma} \label{lemma4.3}
  There exists a positive constant $C$ such that for $0 \leq t \leq T$,
  \begin{align}
	\begin{aligned} \label{LEM4.3}
	  &\|(\phi, \Psi)\|_1^2(t) + \int_{0}^{t} \big[\|\bar{u}_x^{1/2}(\phi, \varphi)\|^2 + \|(\nabla\phi, \nabla\Psi)\|^2 + \|\nabla^2\Psi\|^2\big] dt \\
	  &\leq C \big[\|(\phi_0, \Psi_0)\|_1^2 + \alpha^{1/4}\big] + C(E + \alpha) \int_{0}^{t} \big[\|\bar{u}_x^{1/2}(\phi, \varphi)\|^2 + \|(\nabla\phi, \nabla\Psi)\|^2\big] dt.
	\end{aligned}
  \end{align}
\end{lemma}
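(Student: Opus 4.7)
The plan is to upgrade Lemma~\ref{lemma4.2} by adding a sup-norm bound on $\|\nabla\Psi(t)\|^2$ and a dissipation bound on $\|\nabla^2\Psi\|^2$. The natural device is to take the $L^2$ inner product of the momentum equation $\eqref{REF}_2$ with $-\triangle\Psi$ on $\bbr\times\bbt$ and integrate. The viscous contribution becomes, after one integration by parts in $y$ (periodic) and $x$ (decay at infinity),
\[
\mu\int|\triangle\Psi|^2\,dxdy+(\mu+\lam)\int|\nabla div\Psi|^2\,dxdy,
\]
and since $\|\triangle\Psi\|^2=\|\nabla^2\Psi\|^2$ on $\bbr\times\bbt$, this generates the desired $\|\nabla^2\Psi\|^2$ dissipation on the left-hand side. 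The inertial contribution $-\int\rho\Psi_t\cdot\triangle\Psi\,dxdy$ reshuffles into $\frac{d}{dt}\int\frac{1}{2}\rho|\nabla\Psi|^2\,dxdy$ plus commutator remainders involving $\rho_t$ and $\nabla\rho$; the continuity equation $\eqref{REF}_1$ is used to replace $\rho_t$, after which Cauchy--Schwarz combined with the smallness of $E$ and $\alpha$ reduces them to cubic remainders of precisely the type already absorbed in Lemma~\ref{lemma4.2}.

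The remaining contributions from $\eqref{REF}_2\cdot(-\triangle\Psi)$ fall into three groups. First, the convective terms $\rho u\Psi_x+\rho\psi\Psi_y$ contracted with $\triangle\Psi$ are cubic and handled by Young's inequality plus interpolation exactly as the $G_1,G_5,G_6$ of Lemma~\ref{lemma4.2}. Second --- and this is the essential new coupling --- the pressure term is split as
\[
\Bigl|\int p'(\rho)\nabla\phi\cdot\triangle\Psi\,dxdy\Bigr|\leq\varepsilon\|\triangle\Psi\|^2+C_\varepsilon\|\nabla\phi\|^2,
\]
with $\varepsilon$ small enough that the first piece is absorbed into the Lam\'e dissipation $\mu\|\triangle\Psi\|^2$, while the second piece survives as a right-hand side controlled by the already integrable quantity $\|\nabla\phi\|^2$ furnished by Lemma~\ref{lemma4.2}. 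Third, the background-wave source terms $\rho\bar u_x\varphi$, $(p'(\rho)-\tfrac{\rho}{\bar\rho}p'(\bar\rho))\bar\rho_x$, and $(2\mu+\lam)\bar u_{xx}$ paired with $\triangle\Psi$ are treated with Lemma~\ref{lemma2.2} and the one-dimensional $L^1_x$--$L^\infty_x$ interpolation trick that produced~\eqref{04}. Crucially, $p'(\rho)-\tfrac{\rho}{\bar\rho}p'(\bar\rho)=O(\phi)$, so the corresponding term is cubic in $(\phi,\bar\rho_x,\triangle\Psi)$ and absorbs into $C(E+\alpha)\|\triangle\Psi\|^2$ plus $\|\bar{u}_x^{1/2}\phi\|^2$-type remainders.

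Adding the resulting differential identity to a positive multiple of Lemma~\ref{lemma4.2} (and Lemma~\ref{lemma4.1} for the zeroth-order piece) and integrating in $t\in[0,T]$ yields~\eqref{LEM4.3}, provided $E+\alpha$ is small enough that every absorption constant is strictly less than one. The main obstacle I anticipate is precisely the pressure coupling $p'(\rho)\nabla\phi\cdot\triangle\Psi$: it forces the $\|\nabla\phi\|^2$ and $\|\nabla^2\Psi\|^2$ estimates to be run jointly rather than hierarchically, and the constants in the Young split must be tracked so that a strictly positive surplus of Lam\'e dissipation remains available to absorb the convective cubics and the $\bar\rho_x$-weighted pressure remainder. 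A secondary but unavoidable chore is the wave-profile bookkeeping: every term carrying a factor of $\bar\rho_x,\bar u_x,\bar\rho_{xx}$ or $\bar u_{xx}$ must be matched against the correct mixed $L^p_xL^q_y$ norm from Lemma~\ref{lemma2.2} so that its time integral lands in the $\alpha^{1/4}$ remainder, mirroring the computation in~\eqref{04}.
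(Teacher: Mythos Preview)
Your plan is essentially the paper's own proof: test the momentum equation against $-\triangle\Psi$, harvest $\mu\|\triangle\Psi\|^2+(\mu+\lam)\|\nabla\,div\Psi\|^2$ as dissipation, split the pressure coupling $p'(\rho)\nabla\phi\cdot\triangle\Psi$ by Young into $\varepsilon\|\triangle\Psi\|^2+C_\varepsilon\|\nabla\phi\|^2$, and then add the result to Lemma~\ref{lemma4.2}. One cosmetic difference: the paper multiplies by $-\triangle\Psi/\rho$ rather than $-\triangle\Psi$, which makes the inertial term exactly $(\tfrac12|\nabla\Psi|^2)_t-\mathrm{div}(\cdots)$ with no commutators, at the price of $\nabla(1/\rho)$ commutators in the Lam\'e term instead. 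Both work, but note that in your version the $\nabla\rho$ commutator carries a factor of $\Psi_t$, not $\rho_t$, so you must substitute from the \emph{momentum} equation there, not the continuity equation.

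There is one genuine slip. The convective term $\rho u\Psi_x\cdot\triangle\Psi$ (or $u\Psi_x\cdot\triangle\Psi$ after dividing by $\rho$) is \emph{not} cubic: $u=\bar u+\varphi$, so the leading part $\bar u\,\Psi_x\cdot\triangle\Psi$ is quadratic and cannot be handled ``exactly as $G_1,G_5,G_6$''. The paper integrates by parts to rewrite
\[
-u\Psi_x\cdot\triangle\Psi=\partial_x\Big(\tfrac{u}{2}|\Psi_y|^2-\tfrac{u}{2}|\Psi_x|^2\Big)-\partial_y\big(u\Psi_x\cdot\Psi_y\big)+\tfrac12 u_x|\Psi_x|^2-\tfrac12 u_x|\Psi_y|^2+(\text{cubics}),
\]
then splits $u_x=\bar u_x+\varphi_x$: the piece $\tfrac12\bar u_x|\Psi_x|^2$ is good-signed and goes to the left, while $\tfrac12\bar u_x|\Psi_y|^2\le C\alpha\|\Psi_y\|^2$ is absorbed on the right (this is the paper's $I_3$), and the $\varphi_x$ pieces are genuinely cubic. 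You will need this integration-by-parts step; Young plus interpolation alone does not close the quadratic $\bar u\,\Psi_x\cdot\triangle\Psi$.
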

\begin{proof}
  Multiplying the second equation of \eqref{REF} by $-\triangle\Psi/\rho$ gives
  \begin{align}
    \begin{aligned} \label{15}
      &(\frac{|\nabla\Psi|^2}{2})_t - div(\varphi_t\nabla\varphi + \psi_t\nabla\psi + \frac{\mu + \lam}{\rho}div\Psi\nabla div\Psi - \frac{\mu + \lam}{\rho}div\Psi\triangle\Psi) \\
      &\quad + (\frac{u}{2}|\Psi_y|^2 - \frac{u}{2}|\Psi_x|^2)_x - (u\Psi_x\cdot\Psi_y)_y + \frac{1}{2}\bar{u}_x|\Psi_x|^2 + \frac{\mu}{\rho}|\triangle\Psi|^2 + \frac{\mu + \lam}{\rho}|\nabla div\Psi|^2 \\
      &= - \varphi_y\psi_x\psi_y - \frac{1}{2}\varphi_x|\Psi_x|^2 + \frac{1}{2}\varphi_x(\psi_y^2 - \varphi_y^2) + \psi\Psi_y\cdot\triangle\Psi + \frac{1}{2} \bar{u}_x |\Psi_y|^2 + \bar{u}_x\varphi\triangle\varphi \\
      &\quad + \frac{p'(\rho)}{\rho} \nabla\phi\cdot\triangle\Psi + (\frac{p'(\rho)}{\rho} - \frac{p'(\bar{\rho})}{\bar{\rho}}) \bar{\rho}_x \triangle\varphi
      + \frac{\mu + \lam}{\rho^2}div\Psi\nabla\phi\cdot\nabla div\Psi \\
      &\quad - \frac{\mu + \lam}{\rho^2}div\Psi\nabla\phi\cdot\triangle\Psi + \frac{\mu + \lam}{\rho^2}\bar{\rho}_x div\Psi div\Psi_x - \frac{\mu + \lam}{\rho^2}\bar{\rho}_x div\Psi\triangle\varphi - \frac{2\mu + \lam}{\rho}\bar{u}_{xx} \triangle\varphi.
    \end{aligned}
  \end{align}
  Integrating the above equation over $[0, ~t]\times\bbr\times\bbt$ yields
  \begin{align}
    \begin{aligned} \label{16}
      &\|\nabla\Psi\|^2(t) + \int_{0}^{t} \big[\|\bar{u}_x^{1/2}\Psi_x\|^2 + \|\triangle\Psi\|^2\big] dt \\
      &\leq C \|\nabla\Psi_0\|^2 + C \int_{0}^{t}\int_{\bbt}\int_{\bbr} \Big[|\varphi_y\psi_x\psi_y| + |\psi\Psi_y\cdot\triangle\Psi| + |\frac{1}{2} \bar{u}_x |\Psi_y|^2| + |\bar{u}_x\varphi\triangle\varphi| \\
      &\quad + |\frac{p'(\rho)}{\rho} \nabla\phi\cdot\triangle\Psi| + |(\frac{p'(\rho)}{\rho} - \frac{p'(\bar{\rho})}{\bar{\rho}}) \bar{\rho}_x \triangle\varphi| + |\frac{\mu + \lam}{\rho^2}div\Psi\nabla\phi\cdot\nabla div\Psi| \\
      &\quad + |\frac{\mu + \lam}{\rho^2}\bar{\rho}_x div\Psi div\Psi_x| + |\frac{2\mu + \lam}{\rho}\bar{u}_{xx} \triangle\varphi|\Big] dxdydt + C \int_{0}^{t}\int_{\bbt}\int_{\bbr} \Big[|\frac{1}{2}\varphi_x|\Psi_x|^2| \\
      &\quad + |\frac{1}{2}\varphi_x(\psi_y^2 - \varphi_y^2)| + |\frac{\mu + \lam}{\rho^2}div\Psi\nabla\phi\cdot\triangle\Psi| + |\frac{\mu + \lam}{\rho^2}\bar{\rho}_x div\Psi\triangle\varphi|\Big]dxdydt \\
      &:= C \|\nabla\Psi_0\|^2 + \sum_{i = 1}^9 I_i + J.
    \end{aligned}
  \end{align}
  We estimate each term on the right-hand side of \eqref{16} as follows. By Young's inequality and interpolation inequality, it holds that
  \begin{align*}
  I_1 &= C \int_{0}^{t} \| \varphi_y\psi_x\psi_y \|_{L^1} dt
       \leq \frac{1}{80} \int_{0}^{t} \|\varphi_y\|^2 dt + C \int_{0}^{t} (\|\psi_x\|_{L^4}^4 + \|\psi_y\|_{L^4}^4) dt \\
      &\leq \frac{1}{80} \int_{0}^{t} \|\varphi_y\|^2 dt + C \int_{0}^{t} (\|\psi_x\|_1^4 + \|\psi_y\|_1^4) dt\\
      &\leq \frac{1}{80} \int_{0}^{t} \|\varphi_y\|^2 dt + CE \int_{0}^{t} \|\nabla\psi\|_1^2 dt.
  \end{align*}
  Similarly, we have
  \begin{align*}
  I_2 = C \int_{0}^{t} \| \psi\Psi_y\cdot\triangle\Psi \|_{L^1} dt
      \leq \frac{1}{80} \int_{0}^{t} \|\triangle\Psi\|^2 dt + CE \int_{0}^{t} (\|\nabla\psi\|^2 + \|\Psi_y\|^2) dt.
  \end{align*}
  By Lemma \ref{lemma2.2}, one has
  \begin{align*}
  I_3 = C \int_{0}^{t} \| \frac{1}{2} \bar{u}_x |\Psi_y|^2 \|_{L^1} dt
      \leq C \alpha \int_{0}^{t} \|\Psi_y\|^2 dt.
  \end{align*}
  It follows from Cauchy's inequality and Lemma \ref{lemma2.2} that
  \begin{align*}
  I_4 = C \int_{0}^{t} \| \bar{u}_x\varphi\triangle\varphi \|_{L^1} dt
      \leq \frac{1}{80} \int_{0}^{t} \|\triangle\varphi\|^2 dt + C \alpha \int_{0}^{t} \|\bar{u}_x^{1/2} \varphi\|^2 dt,
  \end{align*}
  \begin{align*}
  I_5 = C \int_{0}^{t} \| \frac{p'(\rho)}{\rho} \nabla\phi\cdot\triangle\Psi \|_{L^1} dt
      \leq \frac{1}{80} \int_{0}^{t} \|\triangle\Psi\|^2 dt + C \int_{0}^{t} \|\nabla\phi\|^2 dt
  \end{align*}
  and
  \begin{align*}
  I_6 &= C \int_{0}^{t} \| (\frac{p'(\rho)}{\rho} - \frac{p'(\bar{\rho})}{\bar{\rho}}) \bar{\rho}_x \triangle\varphi \|_{L^1} dt
       \leq \frac{1}{80} \int_{0}^{t} \|\triangle\varphi\|^2 dt + C \int_{0}^{t} \| \bar{\rho}_x \phi\|^2 dt \\
      &\leq \frac{1}{80} \int_{0}^{t} \|\triangle\varphi\|^2 dt + C \alpha\int_0^t\|\bar u_x^{1/2}\phi\|^2dt .
  \end{align*}
  By Young's inequality and interpolation inequality, one has
  \begin{align*}
  I_7 &= C \int_{0}^{t} \| \frac{\mu + \lam}{\rho^2}div\Psi\nabla\phi\cdot\nabla div\Psi \|_{L^1} dt \\
      &\leq \frac{1}{80} \int_{0}^{t} (\|\nabla\varphi_x\|^2 + \|\nabla\psi_y\|^2) dt + C \int_{0}^{t} (\|div\Psi\|_{L^4}^4 + \|\nabla\phi\|_{L^4}^4) dt \\
      &\leq \frac{1}{80} \int_{0}^{t} (\|\nabla\varphi_x\|^2 + \|\nabla\psi_y\|^2) dt + C \int_{0}^{t} (\|div\Psi\|^2\|div\Psi\|_1^2 + \|\nabla\phi\|^2\|\nabla\phi\|_1^2) dt\\
      &\leq \frac{1}{80} \int_{0}^{t} (\|\nabla\varphi_x\|^2 + \|\nabla\psi_y\|^2) dt + CE \int_{0}^{t} (\|\varphi_x\|^2 + \|\psi_y\|^2 + \|\nabla\phi\|^2) dt.
  \end{align*}
  It follows from Cauchy's inequality and Lemma \ref{lemma2.2} that
  \begin{align*}
  I_8 &= C \int_{0}^{t} \| \frac{\mu + \lam}{\rho^2}\bar{\rho}_x div\Psi div\Psi_x \|_{L^1} dt
     \leq \frac{1}{80} \int_{0}^{t} (\|\varphi_{xx}\|^2 + \|\psi_{xy}\|^2) dt + C \int_{0}^{t} \|\bar{\rho}_x div\Psi\|^2 dt \\
     &\leq \frac{1}{80} \int_{0}^{t} (\|\varphi_{xx}\|^2 + \|\psi_{xy}\|^2) dt + C \alpha \int_{0}^{t} (\|\varphi_x\|^2 + \|\psi_y\|^2) dt,
  \end{align*}
  and
  \begin{align*}
  I_9 &= C \int_{0}^{t} \| \frac{2\mu + \lam}{\rho}\bar{u}_{xx} \triangle\varphi \|_{L^1} dt
       \leq \frac{1}{80} \int_{0}^{t} \|\triangle\varphi\|^2 dt + C \int_{0}^{t} \|\bar{u}_{xx}\|^2 dt \\
      &\leq \frac{1}{80} \int_{0}^{t} \|\triangle\varphi\|^2 dt + C \alpha^{2/3}.
  \end{align*}
All the terms in $J$ can be analyzed similarly as in $I_i~(i=1,2,\cdots 9)$.  Then substituting the resulted estimates for $I_1-I_9$ and $J$ into \eqref{16} and the Elliptic estimate $\|\triangle\Psi\| \sim \|\nabla^2\Psi\|$ give
  \begin{align}
    \begin{aligned} \label{17}
      &\|\nabla\Psi\|^2 + \int_{0}^{t} \|\nabla^2\Psi\|^2 dt\leq C \big(\|\nabla\Psi_0\|^2 + \alpha^{2/3}\big)\\
      & \qquad + C (E + \alpha) \int_{0}^{t} \big[\|\bar{u}_x^{1/2} \varphi\|^2 + \|(\nabla \phi, \nabla \Psi)\|^2\big] dt + C \int_{0}^{t} \|(\nabla\phi, \nabla\varphi)\|^2 dt .
    \end{aligned}
  \end{align}
  Combining \eqref{LEM4.2} and \eqref{17}, we complete the proof of Lemma \ref{lemma4.3}.
\end{proof}

\begin{lemma} \label{lemma4.4}
  There exists a positive constant $C$ such that for $0 \leq t \leq T$,
  \begin{align}
	\begin{aligned} \label{LEM4.4}
	  &\|(\phi, \Psi)\|_1^2(t)  + \|\nabla^2\phi\|^2(t) + \int_{0}^{t}\big[ \|\bar{u}_x^{1/2}(\phi, \varphi)\|^2 + \|(\nabla\phi, \nabla\Psi)\|_1^2 \big]dt \\
	  &\leq C \big[\|(\phi_0, \Psi_0)\|_1^2+ \|\nabla^2\phi_0\|^2 + \alpha^{1/4}\big]+ CE \int_{0}^{t} \|\nabla^3\Psi\|^2 dt \\
	  &\quad + C(E + \alpha) \int_{0}^{t} \big[\|\bar{u}_x^{1/2}(\phi, \varphi)\|^2 + \|(\nabla\phi, \nabla\Psi)\|_1^2\big]dt .
	\end{aligned}
  \end{align}
\end{lemma}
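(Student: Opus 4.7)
The proof extends the mechanism of Lemma \ref{lemma4.2} by one derivative: whereas Lemma \ref{lemma4.2} recovered $\|\nabla\phi\|^2$ dissipation by combining a $\nabla$-differentiated continuity equation with the momentum equation and exploiting a viscous-flux cancellation, we now obtain the $\|\nabla^2\phi\|^2$ bound by repeating the same construction after differentiating \eqref{REF} once.

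Concretely, for each $i=1,2$, apply $\partial_i\nabla$ to $\eqref{REF}_1$ and multiply by $\partial_i\nabla\phi/\rho^2$, and apply $\partial_i$ to $\eqref{REF}_2$ and take the inner product with $\partial_i\nabla\phi/\rho$; then multiply the first identity by $(2\mu+\lam)$, add the second, sum over $i=1,2$, and integrate over $[0,t]\times\bbr\times\bbt$. The same algebraic identity that produced the cancellation in \eqref{13} forces all third-derivative viscous fluxes of $\Psi$ with order-one coefficients to cancel out, leaving on the left the quadratic stored energy controlling $\|\nabla^2\phi\|^2(t)$ (up to a cross term absorbed via Cauchy's inequality and the already-bounded $\|\nabla\Psi\|^2(t)$ from Lemma \ref{lemma4.3}) together with the dissipation $\int_0^t[\|\bar u_x^{1/2}\nabla\phi_x\|^2+\|\nabla^2\phi\|^2]\, dt$ coming respectively from $\frac{2\mu+\lam}{\rho^2}\bar u_x$ and $\frac{p'(\rho)}{\rho}$.

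The resulting right-hand side terms split into three groups. (i) Cubic and quartic nonlinear products of derivatives of $(\phi,\Psi)$, handled exactly as in the $G_1,G_5,G_6,I_1,I_7$ estimates above by Young's inequality and $L^4$-$H^1$ Sobolev interpolation, producing a contribution of the form $CE\int_0^t\|(\nabla\phi,\nabla\Psi)\|_1^2\, dt$. (ii) Terms proportional to the approximate wave derivatives $\bar\rho_x,\bar\rho_{xx},\bar u_x,\bar u_{xx},\bar u_{xxx}$, bounded through Lemma \ref{lemma2.2} and the device of \eqref{04}, producing $C\alpha$-weighted dissipation or pure $O(\alpha^{1/4})$, $O(\alpha^{2/3})$ contributions. (iii) Commutator terms from the failure of $\partial_i$ to commute with $1/\rho$, $u$ and $\rho$: these are linear in $\nabla^3\Psi$ and always carry an additional factor of order $(\phi,\nabla\phi,\Psi,\nabla\Psi)$, so Young's inequality bounds them by $CE\int_0^t\|\nabla^3\Psi\|^2\, dt$.

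The main obstacle is to verify that the viscous cancellation from Lemma \ref{lemma4.2} continues to eliminate every order-one $\nabla^3\Psi$ term in the combined identity; only commutator contributions carrying the smallness factor $E$ are permitted to survive, which is why $\int_0^t\|\nabla^3\Psi\|^2\, dt$ enters \eqref{LEM4.4} with coefficient $CE$ rather than $C$. This is precisely what makes the estimate closable once the $\nabla^3\Psi$ bound of the next lemma becomes available. Adding the resulting bound on $\|\nabla^2\phi\|^2(t)$ and $\int_0^t\|\nabla^2\phi\|^2\, dt$ to Lemma \ref{lemma4.3} then yields \eqref{LEM4.4}.
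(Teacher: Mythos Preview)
Your proposal is correct and follows essentially the same route as the paper: differentiate the continuity equation to second order and the momentum equation (divided by $\rho$) to first order, exploit the same $(2\mu+\lam)$-weighted viscous cancellation as in Lemma~\ref{lemma4.2}, and then add the result to Lemma~\ref{lemma4.3}. One small point of clarification: in the paper's identity, after the cancellation no raw term is literally linear in $\nabla^3\Psi$; the $CE\int_0^t\|\nabla^3\Psi\|^2\,dt$ contribution arises not from commutators of $\partial_i$ with $1/\rho$ but from $L^4$--$H^1$ interpolation applied to cubic products whose highest-order factor is $\nabla^2\Psi$ (e.g.\ $\phi_x\,\nabla\phi_x\cdot\nabla\mathrm{div}\,\Psi$ or $\psi\,\nabla\phi_y\cdot\triangle\Psi$), so your group~(i) already generates the $\nabla^3\Psi$ terms and group~(iii) as you describe it does not actually occur---but this does not affect the validity of the estimate.
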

\begin{proof}
  Applying  the operator $\nabla^2$  on the first equation of \eqref{REF} and then multiplying the resulted equation by $\nabla^2\phi/\rho^2$, we have
  \begin{align}
    \begin{aligned} \label{21}
	  &(\frac{|\nabla^2\phi|^2}{2\rho^2})_t + div(\frac{\textbf{u} |\nabla^2\phi|^2}{2\rho^2}) + \frac{\mu}{2\mu + \lam}((\frac{\nabla\phi_x\cdot\nabla\psi_x}{\rho})_y - (\frac{\nabla\phi_y\cdot\nabla\psi_x}{\rho})_x + (\frac{\nabla\phi_y\cdot\nabla\varphi_y}{\rho})_x \\
	  &\quad - (\frac{\nabla\phi_x\cdot\nabla\varphi_y}{\rho})_y) + \frac{2\bar{u}_x |\nabla\phi_x|^2}{\rho^2} + \frac{(\mu + \lam)\nabla^2\phi\cdot\nabla^2 div\Psi + \mu\nabla^2\phi\cdot\nabla\triangle\Psi}{(2\mu + \lam)\rho} \\
	  &= K(t, x, y) + L(t, x, y),
	\end{aligned}
  \end{align}
  where 
  \begin{align*}
    &K(t, x, y) = - \frac{\phi_x \nabla\phi_x\cdot\nabla div\Psi}{\rho^2} - \frac{\phi_{xy} \nabla\psi\cdot\nabla\phi_x}{\rho^2} + \frac{\bar{u}_x |\nabla^2\phi|^2}{2\rho^2} + \frac{\mu\bar{\rho}_x\nabla\phi_y\cdot\nabla\psi_x}{(2\mu + \lam)\rho^2} \\
    &\quad - \frac{\bar{\rho}_{xx}div\Psi\phi_{xx}}{\rho^2} - \frac{\bar{u}_{xx}\phi_x\phi_{xx}}{\rho^2} - \frac{\bar{\rho}_{xxx}\varphi\phi_{xx}}{\rho^2} - \frac{\bar{u}_{xxx}\phi\phi_{xx}}{\rho^2},
  \end{align*}
  and
  \begin{align*}
    &L(t, x, y) = - \frac{\phi_y \nabla\phi_y\cdot\nabla div\Psi}{\rho^2} - \frac{\nabla\phi\cdot(div\Psi_x\nabla\phi_x + div\Psi_y\nabla\phi_y)}{\rho^2} - \frac{\phi_{yy} \nabla\psi\cdot\nabla\phi_y}{\rho^2} \\
    &\quad - \frac{(\psi_x\nabla\phi_x + \psi_y\nabla\phi_y)\cdot\nabla\phi_y}{\rho^2} - \frac{(\phi_x\nabla^2\varphi + \phi_y\nabla^2\psi)\cdot\nabla^2\phi}{\rho^2} - \frac{(\varphi_x\nabla\phi_x + \varphi_y\nabla\phi_y)\cdot\nabla\phi_x}{\rho^2} \\
    &\quad - \frac{\nabla\varphi\cdot(\phi_{xx}\nabla\phi_x + \phi_{xy}\nabla\phi_y)}{\rho^2} + \frac{|\nabla^2\phi|^2 div\Psi}{2\rho^2} - \frac{\mu\phi_y\nabla\phi_x\cdot\nabla\psi_x}{(2\mu + \lam)\rho^2} + \frac{\mu\phi_x\nabla\phi_y\cdot\nabla\psi_x}{(2\mu + \lam)\rho^2} \\
    &\quad - \frac{\mu\phi_x\nabla\phi_y\cdot\nabla\varphi_y}{(2\mu + \lam)\rho^2} + \frac{\mu\phi_y\nabla\phi_x\cdot\nabla\varphi_y}{(2\mu + \lam)\rho^2} - \frac{\mu\bar{\rho}_x\nabla\phi_y\cdot\nabla\varphi_y}{(2\mu + \lam)\rho^2} - \frac{2\bar{\rho}_x\nabla\phi_x\cdot\nabla div\Psi}{\rho^2} - \frac{\bar{\rho}_x \nabla^2\phi\cdot\nabla^2\varphi}{\rho^2} \\
    &\quad - \frac{2\bar{\rho}_{xx} \nabla\varphi\cdot\nabla\phi_x}{\rho^2} - \frac{2\bar{u}_{xx} \nabla\phi\cdot\nabla\phi_x}{\rho^2}.   
  \end{align*}
Then dividing the second equation of \eqref{REF} by $\rho$, applying the operator $\nabla$ on the resulted equation and then multiplying the final equation by $\nabla^2\phi$, we have
  \begin{align}
    \begin{aligned} \label{22}
     &(\nabla\Psi \cdot \nabla^2\phi)_t - div(\phi_{xt}\nabla\varphi + \phi_{yt}\nabla\psi) - (u\nabla\phi\cdot\Psi_{yy})_x + (u\nabla\phi\cdot\Psi_{xy})_y + \frac{p'(\rho)}{\rho}|\nabla^2\phi|^2 \\
     &\quad - \frac{\mu\nabla^2\phi\cdot\nabla\triangle\Psi +(\mu + \lam)\nabla^2\phi\cdot\nabla^2 div\Psi}{\rho}= \rho\nabla div\Psi\cdot\triangle\Psi + M(t, x, y) + N(t, x, y),
    \end{aligned}
  \end{align}
  where 
  \begin{align*}
    &M(t, x, y) = div\Psi\nabla\phi\cdot\triangle\Psi + \psi\nabla\phi_y\cdot\triangle\Psi - \varphi_x\nabla\varphi\cdot\nabla\phi_x - \frac{p'''(\rho)}{\gamma - 1}\phi_x\nabla\phi\cdot\nabla\phi_x \\
    &\quad - \frac{\mu}{\rho^2}\triangle\varphi\nabla\phi\cdot\nabla\phi_x - \frac{\mu + \lam}{\rho^2}div\Psi_x\nabla\phi\cdot\nabla\phi_x + \bar{\rho}_x div\Psi\triangle\varphi + \bar{u}_x\nabla\phi\cdot\Psi_{xx} + \bar{\rho}_{xx} \varphi\triangle\varphi \\
    &\quad + \bar{u}_{xx} \phi\triangle\varphi - (\frac{p'''(\rho)}{\gamma - 1} - \frac{p'''(\bar{\rho})}{\gamma - 1}) \bar{\rho}_x^2\phi_{xx} - (\frac{p'(\rho)}{\rho} - \frac{p'(\bar{\rho})}{\bar{\rho}}) \bar{\rho}_{xx}\phi_{xx} -  \frac{\mu}{\rho^2}\bar{\rho}_x\nabla\phi_x\cdot\triangle\Psi \\
    &\quad - \frac{2\mu + \lam}{\rho^2}\bar{u}_{xx}\nabla\phi\cdot\nabla\phi_x - \frac{2\mu + \lam}{\rho^2}\bar{\rho}_x\bar{u}_{xx}\phi_{xx} + \frac{2\mu + \lam}{\rho}\bar{u}_{xxx}\phi_{xx},
  \end{align*}
  and
  \begin{align*}
    &N(t, x, y) = \phi_y\nabla\psi\cdot\triangle\Psi + \phi_x\nabla\varphi\cdot\triangle\Psi + \varphi_y\nabla\phi\cdot\Psi_{xy} - \varphi_x\nabla\phi\cdot\Psi_{yy} - \psi_x\nabla\varphi\cdot\nabla\phi_y \\
    &\quad - \psi\nabla\Psi_y\cdot\nabla^2\phi - \varphi_y\nabla\psi\cdot\nabla\phi_x - \psi_y\nabla\psi\cdot\nabla\phi_y - \frac{p'''(\rho)}{\gamma - 1}\phi_y\nabla\phi\cdot\nabla\phi_y - \frac{\mu}{\rho^2}\triangle\psi\nabla\phi\cdot\nabla\phi_y \\
    &\quad - \frac{\mu + \lam}{\rho^2}div\Psi_y\nabla\phi\cdot\nabla\phi_y + \bar{u}_x\phi_x\triangle\varphi + \bar{\rho}_x\nabla\varphi\cdot\triangle\Psi - \bar{u}_x\Psi_x\cdot\nabla\phi_x - \bar{u}_{xx}\varphi\phi_{xx} \\
    &\quad - \bar{u}_x\nabla\varphi\cdot\nabla\phi_x - \frac{2p'''(\rho)}{\gamma - 1}\bar{\rho}_x\nabla\phi\cdot\nabla\phi_x - \frac{\mu + \lam}{\rho^2}\bar{\rho}_x\nabla\phi_x\cdot\nabla div\Psi.
  \end{align*}
Now multiplying \eqref{21} by $2\mu + \lam$, adding the resulted equation and \eqref{22} together, using the same cancellations as in Lemma \ref{lemma4.2} on both last terms in the left hand side of \eqref{21} and \eqref{22}, and then integrating the final equation over $[0, ~t]\times\bbr\times\bbt$, one has
   \begin{equation} \label{23}
      \begin{array}{l}
      \di \Big[\int_{\bbt}\int_{\bbr} \Big(\frac{2\mu + \lam}{2\rho^2}|\nabla^2\phi|^2 + \nabla\Psi\cdot\nabla^2\phi \Big)dxdy\Big]\Big|_0^t \\[3mm]
      \di + \int_{0}^{t}\int_{\bbt}\int_{\bbr} \Big[\frac{2(2\mu + \lam)}{\rho^2}\bar{u}_x |\nabla\phi_x|^2 + \frac{p'(\rho)}{\rho}|\nabla^2\phi|^2 \Big]dxdydt \\[3mm]
      \di = \int_{0}^{t}\int_{\bbt}\int_{\bbr} \Big[\rho\nabla div\Psi\cdot\triangle\Psi + (2\mu + \lam)K(t, x, y) + (2\mu + \lam)L(t, x, y) + M(t, x, y) \\
      \di \qquad \qquad\qquad + N(t, x, y) \Big]dxdydt.
      \end{array}
    \end{equation}
  The combination of \eqref{LEM4.3} and \eqref{23} leads to
    \begin{equation} \label{24}
      \begin{array}{l}
     \di \|(\phi, \Psi)\|_1^2 (t) + \|\nabla^2\phi\|^2(t) + \int_{0}^{t} \big[\|\bar{u}_x^{1/2}(\phi,\varphi,\nabla\phi_x)\|^2+ \|(\nabla\phi, \nabla\Psi)\|_1^2 \big]dt\\
      \di \leq C\big[\|(\phi_0, \Psi_0)\|_1^2+ \|\nabla^2\phi_0\|^2 + \alpha^{1/4})\big]+ C(E + \alpha) \int_{0}^{t}\big[\|\bar{u}_x^{1/2}(\phi, \varphi)\|^2+ \|(\nabla\phi, \nabla\Psi)\|^2\big] dt  \\
      \di \quad + C\int_{0}^{t}\int_{\bbt}\int_{\bbr}\big[|K(t, x, y)| + |L(t, x, y)| + |M(t, x, y)| + |N(t, x, y)|\big]dxdydt.
      \end{array}
    \end{equation}
  Now we estimate the terms on the right-hand side of \eqref{24}. First, we consider the terms in $K(t, x, y)$ as 
  \begin{align}
    \begin{aligned} \label{25}
      &C \int_{0}^{t}\int_{\bbt}\int_{\bbr} |K(t, x, y)| dxdydt \\
      &\leq C\int_{0}^{t}\int_{\bbt}\int_{\bbr} \Big[|\frac{\phi_x \nabla\phi_x\cdot\nabla div\Psi}{\rho^2}| + |\frac{\phi_{xy} \nabla\psi\cdot\nabla\phi_x}{\rho^2}| + |\frac{\bar{u}_x |\nabla^2\phi|^2}{2\rho^2}| + |\frac{\mu\bar{\rho}_x\nabla\phi_y\cdot\nabla\psi_x}{(2\mu + \lam)\rho^2}| \\
      &\quad + |\frac{\bar{\rho}_{xx}div\Psi\phi_{xx}}{\rho^2}| + |\frac{\bar{u}_{xx}\phi_x\phi_{xx}}{\rho^2}| + |\frac{\bar{\rho}_{xxx}\varphi\phi_{xx}}{\rho^2}| + |\frac{\bar{u}_{xxx}\phi\phi_{xx}}{\rho^2}| \Big]dxdydt =\sum_{i=1}^8 K_i.
    \end{aligned}
  \end{align}
  Each term in \eqref{25} will be estimated as follows. By Young's inequality and interpolation inequality, one has
  \begin{align*}
    K_1 &= C \int_{0}^{t} \|\frac{\phi_x \nabla\phi_x\cdot\nabla div\Psi}{\rho^2}\|_{L^1} dt
        \leq \frac{1}{80} \int_{0}^{t} \|\nabla\phi_x\|^2 dt + C \int_{0}^{t} (\|\phi_x\|_{L^4}^4 + \|\nabla div\Psi\|_{L^4}^4) dt \\
        &\leq \frac{1}{80} \int_{0}^{t} \|\nabla\phi_x\|^2 dt + C \int_{0}^{t} (\|\phi_x\|^2 \|\phi_x\|_1^2 + \|\nabla div\Psi\|^2 \|\nabla div\Psi\|_1^2) dt \\
        &\leq \frac{1}{80} \int_{0}^{t} \|\nabla\phi_x\|^2 dt + CE \int_{0}^{t} (\|(\phi_x,\nabla\phi_x)\|^2 + \|(\nabla div \Psi, \nabla^2 div\Psi)\|^2) dt.
  \end{align*}
  By H\"{o}lder's inequality, Cauchy's inequality and Sobolev's inequality, it holds that
  \begin{align*}
    K_2 &= C \int_{0}^{t} \|\frac{\phi_{xy} \nabla\psi\cdot\nabla\phi_x}{\rho^2}\|_{L^1} dt
         \leq C \int_{0}^{t} \|\nabla\psi\|_{L^\infty} \|\phi_{xy}\| \|\nabla\phi_x\| dt \\
        &\leq CE \int_{0}^{t} \|\nabla\psi\|_2 \|\phi_{xy}\| dt
         \leq CE \int_{0}^{t} (\|\nabla\psi\| + \|\nabla^2\psi\| + \|\nabla^3\psi\|) \|\phi_{xy}\| dt \\
        &\leq CE \int_{0}^{t} (\|\phi_{xy}\|^2 + \|\nabla\psi\|^2 + \|\nabla^2\psi\|^2 + \|\nabla^3\psi\|^2) dt.
  \end{align*}
  By Lemma \ref{lemma2.2}, one has
  \begin{align*}
    K_3 = C \int_{0}^{t} \| \frac{\bar{u}_x |\nabla^2\phi|^2}{2\rho^2} \|_{L^1} dt
       \leq C \alpha \int_{0}^{t} \|\nabla^2\phi\|^2 dt.
  \end{align*}
  It follows from Cauchy's inequality and Lemma \ref{lemma2.2} that
  \begin{align*}
    K_4 = C \int_{0}^{t} \|\frac{\mu\bar{\rho}_x\nabla\phi_y\cdot\nabla\psi_x}{(2\mu + \lam)\rho^2}\|_{L^1} dt 
       \leq \frac{1}{80} \int_{0}^{t} \|\nabla\phi_y\|^2 dt + C \alpha \int_{0}^{t} \|\nabla\psi_x\|^2 dt.
  \end{align*}
  By Young's inequality, interpolation inequality and Lemma \ref{lemma2.2}, one has
  \begin{align*}
    K_5 &= C \int_{0}^{t} \|\frac{\bar{\rho}_{xx}div\Psi\phi_{xx}}{\rho^2}\|_{L^1} dt
        \leq \frac{1}{80} \int_{0}^{t} \|\phi_{xx}\|^2 dt + C \int_{0}^{t} \|\bar{\rho}_{xx}\|_{L^4}^4 dt  + C \int_{0}^{t} \|div\Psi\|_{L^4}^4 dt \\
        &\leq \frac{1}{80} \int_{0}^{t} \|\phi_{xx}\|^2 dt + C \int_{0}^{t} \|div\Psi\|^2 \| div\Psi\|_1^2 dt + C \int_{0}^{t} (\alpha^{1/2}(1 + t)^{-1/2})^4 dt\\          
        &\leq \frac{1}{80} \int_{0}^{t} \|\phi_{xx}\|^2 dt + CE \int_{0}^{t} \|(div \Psi,\nabla div\Psi)\|^2 dt + C \alpha^2.
  \end{align*}
Similarly, one can obtain
  \begin{align*}
    K_6 = C \int_{0}^{t} \|\frac{\bar{u}_{xx}\phi_x\phi_{xx}}{\rho^2}\|_{L^1} dt
       \leq \frac{1}{80} \int_{0}^{t} \|\phi_{xx}\|^2 dt + CE \int_{0}^{t} \|\nabla\phi_x\|^2 dt + C \alpha^2,
  \end{align*}
  \begin{align*}
    K_7 = C \int_{0}^{t} \|\frac{\bar{\rho}_{xxx}\varphi\phi_{xx}}{\rho^2}\|_{L^1} dt
       \leq \frac{1}{80} \int_{0}^{t} \|\phi_{xx}\|^2 dt + CE \int_{0}^{t} \|\nabla\varphi\|^2 dt + C \alpha^2,
  \end{align*}
  and
  \begin{align*}
    K_8 = C \int_{0}^{t} \|\frac{\bar{u}_{xxx}\phi\phi_{xx}}{\rho^2}\|_{L^1} dt
       \leq \frac{1}{80} \int_{0}^{t} \|\phi_{xx}\|^2 dt + CE \int_{0}^{t} \|\nabla\phi\|^2 dt + C \alpha^2.
  \end{align*}
The terms in $L(t, x, y)$ can be handled similarly and the details will be omitted for brevity. In the following, we will estimate $M(t, x, y)$ and $N(t, x, y)$ and we only estimate $M(t, x, y)$ for brevity.
First, it holds that
  \begin{align}
    \begin{aligned} \label{26}
      &C \int_{0}^{t}\int_{\bbt}\int_{\bbr} |M(t, x, y)| dxdydt \\
      &\leq C\int_{0}^{t}\int_{\bbt}\int_{\bbr}\Big[ |div\Psi\nabla\phi\cdot\triangle\Psi| + |\psi\nabla\phi_y\cdot\triangle\Psi| + |\varphi_x\nabla\varphi\cdot\nabla\phi_x| + |\frac{p'''(\rho)}{\gamma - 1}\phi_x\nabla\phi\cdot\nabla\phi_x| \\
      &\quad + |\frac{\mu}{\rho^2}\triangle\varphi\nabla\phi\cdot\nabla\phi_x| + |\frac{\mu + \lam}{\rho^2}div\Psi_x\nabla\phi\cdot\nabla\phi_x| + |\bar{\rho}_x div\Psi\triangle\varphi| + |\bar{u}_x\nabla\phi\cdot\Psi_{xx}| + |\bar{\rho}_{xx} \varphi\triangle\varphi| \\
      &\quad + |\bar{u}_{xx} \phi\triangle\varphi| + |(\frac{p'''(\rho)}{\gamma - 1} - \frac{p'''(\bar{\rho})}{\gamma - 1}) \bar{\rho}_x^2\phi_{xx}| + |(\frac{p'(\rho)}{\rho} - \frac{p'(\bar{\rho})}{\bar{\rho}}) \bar{\rho}_{xx}\phi_{xx}| + |\frac{\mu}{\rho^2}\bar{\rho}_x\nabla\phi_x\cdot\triangle\Psi| \\
      &\quad + |\frac{2\mu + \lam}{\rho^2}\bar{u}_{xx}\nabla\phi\cdot\nabla\phi_x| + |\frac{2\mu + \lam}{\rho^2}\bar{\rho}_x\bar{u}_{xx}\phi_{xx}| + |\frac{2\mu + \lam}{\rho}\bar{u}_{xxx}\phi_{xx}| \Big]dxdydt
      = \sum_{i = 1}^{16} M_i.
    \end{aligned}
  \end{align}
  Next we estimate the right hand side of \eqref{26} terms by terms.
  By Young's inequality and interpolation inequality, it holds that
  \begin{align*}
    M_1 &= C \int_{0}^{t} \|div\Psi\nabla\phi\cdot\triangle\Psi\|_{L^1} dt
        \leq \frac{1}{80} \int_{0}^{t} \|\triangle\Psi\|^2 dt + C \int_{0}^{t} (\|div\Psi\|_{L^4}^4 + \|\nabla\phi\|_{L^4}^4) dt \\
        &\leq \frac{1}{80} \int_{0}^{t} \|\triangle\Psi\|^2 dt + C \int_{0}^{t} (\|div\Psi\|^2 \| div\Psi\|_1^2 + \|\nabla\phi\|^2 \|\nabla\phi\|_1^2) dt \\
        &\leq \frac{1}{80} \int_{0}^{t} \|\triangle\Psi\|^2 dt + CE \int_{0}^{t} (\|(div\Psi,\nabla div\Psi)\|^2 + \|(\nabla\phi,\nabla^2\phi)\|^2) dt,
  \end{align*}
  and
  \begin{align*}
    M_2 &= C \int_{0}^{t} \|\psi\nabla\phi_y\cdot\triangle\Psi\|_{L^1} dt        \leq \frac{1}{80} \int_{0}^{t} \|\nabla\phi_y\|^2 dt + C E\int_{0}^{t} \|\triangle\Psi\|^2 dt.
  \end{align*}
 Similarly, one has
  \begin{align*}
    M_3 &= C \int_{0}^{t} \|\varphi_x\nabla\varphi\cdot\nabla\phi_x\|_{L^1} dt\\
    &
       \leq \frac{1}{80} \int_{0}^{t} \|\nabla\phi_x\|^2 dt + CE \int_{0}^{t} (\|(\varphi_x,\nabla\varphi_x)\|^2 + \|(\nabla\varphi,\nabla^2\varphi)\|^2) dt,
  \end{align*}
  \begin{align*}
    M_4 &= C \int_{0}^{t} \|\frac{p'''(\rho)}{\gamma - 1}\phi_x\nabla\phi\cdot\nabla\phi_x\|_{L^1} dt\\
    &\leq \frac{1}{80} \int_{0}^{t} \|\nabla\phi_x\|^2 dt + CE \int_{0}^{t} (\|(\phi_x,\nabla\phi_x)\|^2 + \|(\nabla\phi,\nabla^2\phi)\|^2) dt,
  \end{align*}
  \begin{align*}
    M_5 &= C \int_{0}^{t} \|\frac{\mu}{\rho^2}\triangle\varphi\nabla\phi\cdot\nabla\phi_x\|_{L^1} dt\\
    &
       \leq \frac{1}{80} \int_{0}^{t} \|\nabla\phi_x\|^2 dt + CE \int_{0}^{t} (\|(\triangle\varphi,\nabla\triangle\varphi)\|^2 + \|(\nabla\phi,\nabla^2\phi)\|^2) dt,
  \end{align*}
  and
  \begin{align*}
    M_6 &= C \int_{0}^{t} \|\frac{\mu + \lam}{\rho^2}div\Psi_x\nabla\phi\cdot\nabla\phi_x\|_{L^1} dt \\
       &\leq \frac{1}{80} \int_{0}^{t} \|\nabla\phi_x\|^2 dt + CE \int_{0}^{t} (\|(div\Psi_x,\nabla div\Psi_x)\|^2 + \|(\nabla\phi,\nabla^2\phi)\|^2) dt.
  \end{align*}
  It follows from Cauchy's inequality and Lemma \ref{lemma2.2} that
  \begin{align*}
    M_7 = C \int_{0}^{t} \|\bar{\rho}_x div\Psi\triangle\varphi\|_{L^1} dt 
       \leq \frac{1}{80} \int_{0}^{t} \|\triangle\varphi\|^2 dt + C \alpha \int_{0}^{t} (\|\varphi_x\|^2 + \|\psi_y\|^2) dt,
  \end{align*}
and
  \begin{align*}
    M_8 = C \int_{0}^{t} \|\bar{u}_x\nabla\phi\cdot\Psi_{xx}\|_{L^1} dt 
       \leq \frac{1}{80} \int_{0}^{t} \|\Psi_{xx}\|^2 dt + C \alpha \int_{0}^{t} \|\nabla\phi\|^2 dt.
  \end{align*}
  By Young's inequality, interpolation inequality and Lemma \ref{lemma2.2}, one has
  \begin{align*}
    M_9 &= C \int_{0}^{t} \|\bar{\rho}_{xx} \varphi\triangle\varphi\|_{L^1} dt
        \leq \frac{1}{80} \int_{0}^{t} \|\triangle\varphi\|^2 dt + C \int_{0}^{t} \|\bar{\rho}_{xx}\|_{L^\infty}^2 \|\varphi\|^2 dt \\
        &\leq \frac{1}{80} \int_{0}^{t} \|\triangle\varphi\|^2 dt + C\alpha^{1/2} \sup_{0\leq t\leq T} \| \varphi\|^2,
  \end{align*}
  and
  \begin{align*}
    M_{10} = C \int_{0}^{t} \|\bar{u}_{xx} \phi\triangle\varphi\|_{L^1} dt
          \leq \frac{1}{80} \int_{0}^{t} \|\triangle\varphi\|^2 dt + CE \int_{0}^{t} \|\nabla \phi\|^2 dt + C \alpha^2.
  \end{align*}
Then it follows from Cauchy's inequality and Lemma \ref{lemma2.2} that
  \begin{align*}
    M_{11} &= C \int_{0}^{t} \|(\frac{p'''(\rho)}{\gamma - 1} - \frac{p'''(\bar{\rho})}{\gamma - 1}) \bar{\rho}_x^2\phi_{xx}\|_{L^1} dt
           \leq \frac{1}{80} \int_{0}^{t} \|\phi_{xx}\|^2 dt + C \int_{0}^{t} \|\bar{\rho}_x\|_{L^4}^4 dt \\
           &\leq \frac{1}{80} \int_{0}^{t} \|\phi_{xx}\|^2 dt + C \alpha^2,
  \end{align*}
  \begin{align*}
    M_{12} &= C \int_{0}^{t} \|(\frac{p'(\rho)}{\rho} - \frac{p'(\bar{\rho})}{\bar{\rho}}) \bar{\rho}_{xx}\phi_{xx}\|_{L^1} dt
           \leq \frac{1}{80} \int_{0}^{t} \|\phi_{xx}\|^2 dt + C \int_{0}^{t} \|\bar{\rho}_{xx}\|^2 dt \\
           &\leq \frac{1}{80} \int_{0}^{t} \|\phi_{xx}\|^2 dt + C \alpha^{2/3},
  \end{align*}
  and
  \begin{align*}
    M_{13} = C \int_{0}^{t} \|\frac{\mu}{\rho^2}\bar{\rho}_x\nabla\phi_x\cdot\triangle\Psi\|_{L^1} dt
          \leq \frac{1}{80} \int_{0}^{t} \|\triangle\Psi\|^2 dt + C \alpha \int_{0}^{t} \|\nabla\phi_x\|^2 dt.
  \end{align*}
Similar to $M_9$, we have
  \begin{align*}
    M_{14} = C \int_{0}^{t} \|\frac{2\mu + \lam}{\rho^2}\bar{u}_{xx}\nabla\phi\cdot\nabla\phi_x\|_{L^1} dt
          \leq \frac{1}{80} \int_{0}^{t} \|\nabla\phi_x\|^2 dt + CE \int_{0}^{t} \|\nabla^2 \phi\|^2 dt + C \alpha^2.
  \end{align*}
  It follows from Cauchy's inequality and Lemma \ref{lemma2.2} that
  \begin{align*}
    M_{15} &= C \int_{0}^{t} \|\frac{2\mu + \lam}{\rho^2}\bar{\rho}_x\bar{u}_{xx}\phi_{xx}\|_{L^1} dt
          \leq C \int_{0}^{t} \|\bar{\rho}_x\phi_{xx}\|^2 dt + C \int_{0}^{t} \|\bar{u}_{xx}\|^2 dt \\
           &\leq C \alpha \int_{0}^{t} \|\phi_{xx}\|^2 dt + C \alpha^{2/3},
  \end{align*}
  and
  \begin{align*}
    M_{16} &= C \int_{0}^{t} \|\frac{2\mu + \lam}{\rho}\bar{u}_{xxx}\phi_{xx}\|_{L^1} dt
          \leq \frac{1}{80} \int_{0}^{t} \|\phi_{xx}\|^2 dt + C \int_{0}^{t} \|\bar{u}_{xxx}\|^2 dt \\
           &\leq \frac{1}{80} \int_{0}^{t} \|\phi_{xx}\|^2 dt + C \alpha^{2/3}.
  \end{align*}
  Substituting the estimates of $K(t, x, y), L(t, x, y), M(t, x, y)$ and $N(t, x, y)$ into \eqref{24} and using the Elliptic estimates $\|\triangle\Psi\| \sim\|\nabla^2\Psi\|$ and $\|\nabla\triangle\Psi\| \sim \|\nabla^3\Psi\|$, we can complete the proof of Lemma \ref{lemma4.4}.
\end{proof}

\begin{lemma} \label{lemma4.5}
  There exists a positive constant $C$ such that for $0 \leq t \leq T$,
  \begin{align}
	\begin{aligned} \label{LEM4.5}
	  &\|(\phi, \Psi)\|_2^2(t) + \int_{0}^{t}\Big[ \|\bar{u}_x^{1/2}(\phi, \varphi)\|^2 + \|(\nabla\phi, \nabla\Psi)\|_1^2 + \|\nabla^3\Psi\|^2 \Big]dt \\
	  &\leq C \big[\|(\phi_0, \Psi_0)\|_2^2 + \alpha^{1/4}\big] + C(E + \alpha) \int_{0}^{t} \Big[\|\bar{u}_x^{1/2}(\phi, \varphi)\|^2 + \|(\nabla\phi, \nabla\Psi)\|_1^2 + \|\nabla^3\Psi\|^2\Big] dt.
	\end{aligned}
  \end{align}
\end{lemma}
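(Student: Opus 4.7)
The previous four lemmas have already controlled $\|(\phi,\Psi)\|_1^2(t) + \|\nabla^2\phi\|^2(t)$ together with the dissipations $\|\bar u_x^{1/2}(\phi,\varphi)\|^2$ and $\|(\nabla\phi,\nabla\Psi)\|_1^2$, but the bound in Lemma \ref{lemma4.4} still carries an unresolved term $CE\int_0^t\|\nabla^3\Psi\|^2\,dt$ on its right side. Lemma \ref{lemma4.5} therefore needs to accomplish exactly two things: produce the supremum estimate for $\|\nabla^2\Psi\|^2(t)$, and produce the third-order parabolic dissipation $\int_0^t\|\nabla^3\Psi\|^2\,dt$ on the left so that the leftover term inherited from \eqref{LEM4.4} can be absorbed when $E+\alpha$ is small.

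The natural procedure mirrors Lemma \ref{lemma4.3} one derivative higher. First I would apply the gradient $\nabla$ to the momentum equation in \eqref{REF} and take the $L^2$ inner product with $-\nabla\triangle\Psi/\rho$. After integration by parts, the principal part contributes
\begin{equation*}
\frac{d}{dt}\,\frac{\|\nabla^2\Psi\|^2}{2} + \int_{\bbt}\!\!\int_{\bbr}\Big[\frac{\mu}{\rho}|\nabla\triangle\Psi|^2 + \frac{\mu+\lambda}{\rho}|\nabla\nabla\,div\Psi|^2\Big]\,dxdy,
\end{equation*}
which by the uniform positivity \eqref{db} and the standard elliptic equivalence $\|\nabla\triangle\Psi\|\sim\|\nabla^3\Psi\|$ yields the sought-for dissipation. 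Integration over $[0,t]\times\bbr\times\bbt$ then gives an identity of the same shape as \eqref{16}, with the forcing terms being the $\nabla$ of the nonlinear convection, pressure, background-wave, and commutator terms, each tested against $\nabla\triangle\Psi/\rho$.

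The right-hand side then splits into three categories, handled exactly as in Lemmas \ref{lemma4.3}--\ref{lemma4.4}. (a) Genuinely nonlinear terms such as $\nabla(\rho\bu\cdot\nabla\Psi)\cdot\nabla\triangle\Psi/\rho$ or $\nabla(p'(\rho)\nabla\phi)\cdot\nabla\triangle\Psi/\rho$ are bounded by $\tfrac{1}{80}\|\nabla^3\Psi\|^2$ plus $CE$ times quantities of the form $\|\nabla^2\Psi\|^2 + \|\nabla^2\phi\|^2 + \|(\nabla\phi,\nabla\Psi)\|^2$, using H\"older, Sobolev, and the interpolation $\|f\|_{L^4}^4\le C\|f\|^2\|\nabla f\|^2$ precisely as in the estimates of $I_1,\dots,I_9$ and $M_1,\dots,M_{16}$. (b) Terms linear in $\bar u_x,\bar\rho_x$ are controlled by $\tfrac{1}{80}\|\nabla^3\Psi\|^2 + C\alpha\,\|(\nabla\phi,\nabla\Psi)\|_1^2$ via Lemma \ref{lemma2.2}. (c) Pure background forcing terms such as $\nabla\bar u_{xx}$ and $\nabla\bar u_{xxx}$ paired against $\nabla\triangle\Psi/\rho$ or interpolated against $\phi,\varphi$ contribute at most $C\alpha^{1/4}$ after time integration, by the identical Sobolev--H\"older--Young chain that produced $\alpha^{1/4}$ in \eqref{04} (using $\|\bar u_{xxx}\|_{L^p_x}\le C\min(\alpha,(1+t)^{-1})$ from Lemma \ref{lemma2.2}).

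Adding the resulting inequality to \eqref{LEM4.4}, the $\|\nabla^3\Psi\|^2$-term on the left now dominates the $CE\int_0^t\|\nabla^3\Psi\|^2\,dt$ contribution from Lemma \ref{lemma4.4} once $E+\alpha$ is small, while all lower-order residuals are absorbed into the $C(E+\alpha)\int_0^t[\|\bar u_x^{1/2}(\phi,\varphi)\|^2+\|(\nabla\phi,\nabla\Psi)\|_1^2+\|\nabla^3\Psi\|^2]\,dt$ term on the right of \eqref{LEM4.5}. The main obstacle is purely bookkeeping: the operator $\nabla$ applied to the momentum equation produces a lengthy list of coupling terms between derivatives of $\phi,\Psi,\bar\rho,\bar u$ of mixed orders, and each must be carefully classified as either absorbable into the third-order dissipation, already controlled by the lower-order norms established in Lemmas \ref{lemma4.1}--\ref{lemma4.4}, or yielding the $\alpha^{1/4}$ remainder after time integration. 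No new algebraic cancellation beyond those identified in Lemmas \ref{lemma4.2} and \ref{lemma4.4} is required, since this is a pure momentum-equation energy estimate whose top-order linear structure is the strictly parabolic viscous Laplacian.
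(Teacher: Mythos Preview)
Your proposal is correct and follows essentially the same route as the paper: divide the momentum equation by $\rho$, apply $\nabla$, test against $-\nabla\triangle\Psi$, estimate the resulting forcing terms via Young/H\"older/interpolation and Lemma~\ref{lemma2.2}, then add the outcome to \eqref{LEM4.4}. One small clarification: the pressure coupling $\frac{p'(\rho)}{\rho}\nabla^2\phi\cdot\nabla\triangle\Psi$ is linear and yields $C\int_0^t\|\nabla^2\phi\|^2\,dt$ without an $E$ factor (it is not in your category~(a)), but as you note this is absorbed upon combining with Lemma~\ref{lemma4.4}.
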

\begin{proof}
  We first divide the second equation of \eqref{REF} by $\rho$, then applying the operator $\nabla$ to the resulted equation and then multiply the final equation by $-\nabla\triangle\Psi$ to obtain
  \begin{align}
    \begin{aligned} \label{27}
      &(\frac{|\nabla^2\Psi|^2}{2})_t - div(\varphi_{tx}\nabla\varphi_x + \varphi_{ty}\nabla\varphi_y + \psi_{tx}\nabla\psi_x + \psi_{ty}\nabla\psi_y + \frac{\mu + \lam}{\rho}div\Psi_x\nabla div\Psi_x \\
      & + \frac{\mu + \lam}{\rho}div\Psi_y\nabla div\Psi_y - \frac{\mu + \lam}{\rho}(div\Psi_x\triangle\Psi_x + div\Psi_y\triangle\Psi_y)) + (\frac{u}{2}|\Psi_{yy}|^2 - \frac{u}{2}|\Psi_{xx}|^2)_x \\
      &- (u\Psi_{xx}\cdot\Psi_{xy} + u\Psi_{xy}\cdot\Psi_{yy})_y + \frac{1}{2}\bar{u}_x|\Psi_{xx}|^2 + \frac{\mu}{\rho}|\nabla\triangle\Psi|^2 + \frac{\mu + \lam}{\rho}|\nabla^2 div\Psi|^2 \\
      &= P(t, x, y) + Q(t, x, y),
    \end{aligned}
  \end{align}
  where
  \begin{align*}
    &P(t, x, y) = - \varphi_y\Psi_{xy}\cdot\triangle\Psi + \varphi_x\nabla\varphi\cdot\nabla\triangle\varphi + \psi\nabla\Psi_y\cdot\nabla\triangle\Psi + \frac{p'(\rho)}{\rho} \nabla^2\phi\cdot\nabla\triangle\Psi \\
    &\quad + \frac{p'''(\rho)}{\gamma - 1} \phi_x\nabla\phi\cdot\nabla\triangle\varphi + \frac{\mu}{\rho^2}\triangle\varphi\nabla\phi\cdot\nabla\triangle\varphi + \frac{\mu + \lam}{\rho^2}div\Psi_x\nabla\phi\cdot\triangle\Psi_x + \frac{1}{2} \bar{u}_x |\Psi_{yy}|^2 \\
    &\quad + \bar{u}_x\Psi_x\cdot\triangle\Psi_x + \bar{u}_{xx}\varphi\triangle\varphi_x + (\frac{p'''(\rho)}{\gamma - 1} - \frac{p'''(\bar{\rho})}{\gamma - 1}) \bar{\rho}_x^2 \triangle\varphi_x + (\frac{p'(\rho)}{\rho} - \frac{p'(\bar{\rho})}{\bar{\rho}}) \bar{\rho}_{xx} \triangle\varphi_x \\
    &\quad + \frac{\mu}{\rho^2}\bar{\rho}_x\triangle\Psi\cdot\triangle\Psi_x + \frac{2\mu + \lam}{\rho^2}\bar{u}_{xx}\nabla\phi\cdot\nabla\triangle\varphi + \frac{2\mu + \lam}{\rho^2}\bar{\rho}_x\bar{u}_{xx}\triangle\varphi_x - \frac{2\mu + \lam}{\rho}\bar{u}_{xxx} \triangle\varphi_x,
  \end{align*}
  and
  \begin{align*}
    &Q(t, x, y) = \frac{1}{2}\varphi_x(|\Psi_{yy}|^2 - |\Psi_{xx}|^2) + \psi_x\nabla\varphi\cdot\nabla\triangle\psi + \nabla\psi\cdot(\varphi_y\nabla\triangle\varphi + \psi_y\nabla\triangle\psi) \\
    &\quad + \bar{u}_x\nabla\varphi\cdot\nabla\triangle\varphi + \frac{p'''(\rho)}{\gamma - 1}\phi_y\nabla\phi\cdot\nabla\triangle\psi + \frac{p'''(\rho)}{\gamma - 1}\bar{\rho}_x\nabla\phi\cdot(\triangle\Psi_x + \nabla\triangle\varphi) \\
    &\quad + \frac{\mu}{\rho^2}\triangle\psi\nabla\phi\cdot\nabla\triangle\psi + \frac{\mu + \lam}{\rho^2}\nabla\phi\cdot(div\Psi_y\triangle\Psi_y - div\Psi_x\nabla div\Psi_x - div\Psi_y\nabla div\Psi_y) \\
    &\quad + \frac{\mu + \lam}{\rho^2}\bar{\rho}_x\nabla div\Psi\cdot(\nabla\triangle\varphi - \nabla div\Psi_x) + \frac{\mu + \lam}{\rho^2}\nabla\phi\cdot(div\Psi_x\nabla\triangle\varphi + div\Psi_y\nabla\triangle\psi) \\
    &\quad + \frac{\mu + \lam}{\rho^2}\bar{\rho}_x\nabla div\Psi\cdot\triangle\Psi_x.
  \end{align*}
  Integrating the equation \eqref{27} over $[0, ~t]\times\bbr\times\bbt$ yields that
  \begin{align}
    \begin{aligned} \label{28}
      &\|\nabla^2\Psi\|^2 + \int_{0}^{t} \big[\|\bar{u}_x^{1/2}\Psi_{xx}\|^2 + \|\nabla\triangle\Psi\|^2\big] dt \\
      &\leq C \|\nabla^2\Psi_0\|^2 + C \int_{0}^{t}\int_{\bbt}\int_{\bbr} \big[|P(t, x, y)| + |Q(t, x, y)|\big]dxdydt.
    \end{aligned}
  \end{align}
  Now we estimate the terms on the right-hand side of \eqref{28}. Since the estimate for $Q(t, x, y)$ is similar to but somewhat easier than that on $P(t, x, y)$, we only consider the terms in $P(t, x, y)$ for simplicity as
  \begin{align}
    \begin{aligned} \label{29}
      &C \int_{0}^{t}\int_{\bbt}\int_{\bbr} |P(t, x, y)| dxdydt \\
      &\leq C \int_{0}^{t}\int_{\bbt}\int_{\bbr} |\varphi_y\Psi_{xy}\cdot\triangle\Psi| + |\varphi_x\nabla\varphi\cdot\nabla\triangle\varphi| + |\psi\nabla\Psi_y\cdot\nabla\triangle\Psi| + |\frac{p'(\rho)}{\rho} \nabla^2\phi\cdot\nabla\triangle\Psi| \\
      &\quad + |\frac{p'''(\rho)}{\gamma - 1} \phi_x\nabla\phi\cdot\nabla\triangle\varphi| + |\frac{\mu}{\rho^2}\triangle\varphi\nabla\phi\cdot\nabla\triangle\varphi| + |\frac{\mu + \lam}{\rho^2}div\Psi_x\nabla\phi\cdot\triangle\Psi_x| + |\frac{1}{2} \bar{u}_x |\Psi_{yy}|^2| \\
      &\quad + |\bar{u}_x\Psi_x\cdot\triangle\Psi_x| + |\bar{u}_{xx}\varphi\triangle\varphi_x| + |(\frac{p'''(\rho)}{\gamma - 1} - \frac{p'''(\bar{\rho})}{\gamma - 1}) \bar{\rho}_x^2 \triangle\varphi_x| + |(\frac{p'(\rho)}{\rho} - \frac{p'(\bar{\rho})}{\bar{\rho}}) \bar{\rho}_{xx} \triangle\varphi_x| \\
      &\quad + |\frac{\mu}{\rho^2}\bar{\rho}_x\triangle\Psi\cdot\triangle\Psi_x| + |\frac{2\mu + \lam}{\rho^2}\bar{u}_{xx}\nabla\phi\cdot\nabla\triangle\varphi| + |\frac{2\mu + \lam}{\rho^2}\bar{\rho}_x\bar{u}_{xx}\triangle\varphi_x| \\
      &\quad + |\frac{2\mu + \lam}{\rho}\bar{u}_{xxx} \triangle\varphi_x| dxdydt =\sum_{i=1}^{16} P_i.
    \end{aligned}
  \end{align}
  Each term in \eqref{29} will be estimated as follows. By Young's inequality, and interpolation inequality, it holds that
  \begin{align*}
    P_1 &= C \int_{0}^{t} \|\varphi_y\Psi_{xy}\cdot\triangle\Psi\|_{L^1} dt
        \leq \frac{1}{80} \int_{0}^{t} \|\Psi_{xy}\|^2 dt + C \int_{0}^{t} (\|\varphi_y\|_{L^4}^4 + \|\triangle\Psi\|_{L^4}^4) dt \\
        &\leq \frac{1}{80} \int_{0}^{t} \|\Psi_{xy}\|^2 dt + C \int_{0}^{t} (\|\varphi_y\|^2 \|\varphi_y\|_1^2 + \|\triangle\Psi\|^2 \|\triangle\Psi\|_1^2) dt \\
        &\leq \frac{1}{80} \int_{0}^{t} \|\Psi_{xy}\|^2 dt + CE \int_{0}^{t} (\|(\varphi_y,\nabla\varphi_y)\|^2 + \|(\triangle\Psi, \nabla\triangle\Psi)\|^2) dt.
  \end{align*}
Similarly, one can obtain
  \begin{align*}
    P_2 &= C \int_{0}^{t} \|\varphi_x\nabla\varphi\cdot\nabla\triangle\varphi\|_{L^1} dt
    \\
    &        \leq \frac{1}{80} \int_{0}^{t} \|\nabla\triangle\varphi\|^2 dt + CE \int_{0}^{t} (\|(\varphi_x,\nabla\varphi_x)\|^2 + \|(\nabla\varphi,\nabla^2\varphi)\|^2) dt,
  \end{align*}
  \begin{align*}
    P_3 = C \int_{0}^{t} \|\psi\nabla\Psi_y\cdot\nabla\triangle\Psi\|_{L^1} dt
        \leq \frac{1}{80} \int_{0}^{t} \|\nabla\triangle\Psi\|^2 dt + CE \int_{0}^{t} \|\nabla\Psi_y\|^2 dt,
  \end{align*}
 and
  \begin{align*}
    P_4 = C \int_{0}^{t} \|\frac{p'(\rho)}{\rho} \nabla^2\phi\cdot\nabla\triangle\Psi\|_{L^1} dt
        \leq \frac{1}{80} \int_{0}^{t} \|\nabla\triangle\Psi\|^2 dt + C \int_{0}^{t} \|\nabla^2\phi\|^2 dt.
  \end{align*}
Similar to $P_1$, we can obtain the estimate of $P_5-P_7$:
  \begin{align*}
    P_5 &= C \int_{0}^{t} \|\frac{p'''(\rho)}{\gamma - 1} \phi_x\nabla\phi\cdot\nabla\triangle\varphi\|_{L^1} dt\\
    &
       \leq \frac{1}{80} \int_{0}^{t} \|\nabla\triangle\varphi\|^2 dt + CE \int_{0}^{t} (\|(\phi_x,\nabla\phi_x)\|^2 + \|(\nabla\phi,\nabla^2\phi)\|^2) dt,
  \end{align*}
  \begin{align*}
    P_6 &= C \int_{0}^{t} \|\frac{\mu}{\rho^2}\triangle\varphi\nabla\phi\cdot\nabla\triangle\varphi\|_{L^1} dt\\&
        \leq \frac{1}{80} \int_{0}^{t} \|\nabla\triangle\varphi\|^2 dt + CE \int_{0}^{t} (\|(\triangle\varphi,\nabla\triangle\varphi)\|^2 + \|(\nabla\phi,\nabla^2\phi)\|^2) dt,
  \end{align*}
  and
  \begin{align*}
    P_7 &= C \int_{0}^{t} \|\frac{\mu + \lam}{\rho^2}div\Psi_x\nabla\phi\cdot\triangle\Psi_x\|_{L^1} dt \\
        &\leq \frac{1}{80} \int_{0}^{t} \|\triangle\Psi_x\|^2 dt + CE \int_{0}^{t} (\|(div\Psi_x,\nabla div\Psi_x)\|^2 + \|(\nabla\phi,\nabla^2\phi)\|^2) dt.
  \end{align*}
  By Lemma \ref{lemma2.2}, one has
  \begin{align*}
    P_8 = C \int_{0}^{t} \|\frac{1}{2} \bar{u}_x |\Psi_{yy}|^2\|_{L^1} dt
       \leq C \alpha \int_{0}^{t} \|\Psi_{yy}\|^2 dt.
  \end{align*}
  It follows from Cauchy's inequality, interpolation inequality and Lemma \ref{lemma2.2} that
  \begin{align*}
    P_9 &= C \int_{0}^{t} \|\bar{u}_x\Psi_x\cdot\triangle\Psi_x\|_{L^1} dt
       \leq \frac{1}{80} \int_{0}^{t} \|\triangle\Psi_x\|^2 dt + C \int_{0}^{t} \|\bar{u}_x\Psi_x\|^2 dt \\
       &\leq \frac{1}{80} \int_{0}^{t} \|\triangle\Psi_x\|^2 dt + C \alpha \int_{0}^{t} \|\Psi_x\|^2 dt,
  \end{align*}
  \begin{align*}
    P_{10} &= C \int_{0}^{t} \|\bar{u}_{xx}\varphi\triangle\varphi_x\|_{L^1} dt
           \leq \frac{1}{80} \int_{0}^{t} \|\triangle\varphi_x\|^2 dt + C \int_{0}^{t} \|\bar{u}_{xx}\|_{L^\infty}^2  \|\varphi\|^2 dt \\
           &\leq \frac{1}{80} \int_{0}^{t} \|\triangle\varphi_x\|^2 dt + C\alpha^{1/2} \sup_{0\leq t\leq T} \|\varphi\|^2 ,
  \end{align*}
  \begin{align*}
    P_{11} &= C \int_{0}^{t} \|(\frac{p'''(\rho)}{\gamma - 1} - \frac{p'''(\bar{\rho})}{\gamma - 1}) \bar{\rho}_x^2 \triangle\varphi_x\|_{L^1} dt
           \leq \frac{1}{80} \int_{0}^{t} \|\triangle\varphi_x\|^2 dt + C \int_{0}^{t} \|\bar{\rho}_x\|_{L^4}^4 dt \\
           &\leq \frac{1}{80} \int_{0}^{t} \|\triangle\varphi_x\|^2 dt + C \alpha^2,
  \end{align*}
  \begin{align*}
    P_{12} &= C \int_{0}^{t} \|(\frac{p'(\rho)}{\rho} - \frac{p'(\bar{\rho})}{\bar{\rho}}) \bar{\rho}_{xx} \triangle\varphi_x\|_{L^1} dt
           \leq \frac{1}{80} \int_{0}^{t} \|\triangle\varphi_x\|^2 dt + C \int_{0}^{t} \|\bar{\rho}_{xx}\|^2 dt \\
           &\leq \frac{1}{80} \int_{0}^{t} \|\triangle\varphi_x\|^2 dt + C \alpha^{2/3},
  \end{align*}
  and
  \begin{align*}
    P_{13} = C \int_{0}^{t} \|\frac{\mu}{\rho^2}\bar{\rho}_x\triangle\Psi\cdot\triangle\Psi_x\|_{L^1} dt
          \leq \frac{1}{80} \int_{0}^{t} \|\triangle\Psi_x\|^2 dt + C \alpha \int_{0}^{t} \|\triangle\Psi\|^2 dt.
  \end{align*}
Similar to $P_{10}$, we have
  \begin{align*}
    P_{14} = C \int_{0}^{t} \|\frac{2\mu + \lam}{\rho^2}\bar{u}_{xx}\nabla\phi\cdot\nabla\triangle\varphi\|_{L^1} dt
          \leq \frac{1}{80} \int_{0}^{t} \|\nabla\triangle\varphi\|^2 dt + C\alpha \int_{0}^{t} \|\nabla\phi\|^2 dt.
  \end{align*}
  By Cauchy's inequality and Lemma \ref{lemma2.2}, one can obtain
  \begin{align*}
    P_{15} &= C \int_{0}^{t} \|\frac{2\mu + \lam}{\rho^2}\bar{\rho}_x\bar{u}_{xx}\triangle\varphi_x\|_{L^1} dt
          \leq \frac{1}{80} \int_{0}^{t} \|\bar{\rho}_x\triangle\varphi_x\|^2 dt + C \int_{0}^{t} \|\bar{u}_{xx}\|^2 dt \\
           &\leq C \alpha \int_{0}^{t} \|\triangle\varphi_x\|^2 dt + C \alpha^{2/3},
  \end{align*}
  and
  \begin{align*}
    P_{16} &= C \int_{0}^{t} \|\frac{2\mu + \lam}{\rho}\bar{u}_{xxx} \triangle\varphi_x\|_{L^1} dt
           \leq \frac{1}{80} \int_{0}^{t} \|\triangle\varphi_x\|^2 dt + C \int_{0}^{t} \|\bar{u}_{xxx}\|^2 dt \\
           &\leq \frac{1}{80} \int_{0}^{t} \|\triangle\varphi_x\|^2 dt + C \alpha^{2/3}.
  \end{align*}
  Substituting the estimates of $P(t, x, y)$ and $Q(t, x, y)$ into \eqref{28} and using the Elliptic estimates $\|\triangle\Psi\| \sim \|\nabla^2\Psi\|$ and $\|\nabla\triangle\Psi\| \sim \|\nabla^3\Psi\|$, it holds that
  \begin{align}
    \begin{aligned} \label{210}
      &\|\nabla^2\Psi\|^2(t) + \int_{0}^{t}\big[ \|\bar{u}_x^{1/2}\Psi_{xx}\|^2 + \|\nabla^3\Psi\|^2\big] dt \\
      &\leq C \big[\|\nabla^2\Psi_0\|^2 + \alpha^{2/3}\big]  + C \int_{0}^{t} \|(\nabla^2\phi, \nabla^2\Psi)\|^2 dt \\
      &\quad + C(E + \alpha) \int_{0}^{t}\Big[\|(\nabla\phi, \nabla\Psi)\|_1^2 + \|\nabla^3\Psi\|^2\Big]dt.
    \end{aligned}
  \end{align}
  Combining \eqref{LEM4.4} and \eqref{210}, we complete the proof of Lemma \ref{lemma4.5}.
\end{proof}
Therefore if we take $E$ and $\alpha$ suitably small, saying $E + \alpha \leq \epsilon_1$, we have the desired a priori estimate \eqref{PRO3.2}. Thus the proof of a priori estimates in Proposition \ref{proposition3.2} is completed.


\end{document}